\newcommand{\diff}[1]{\text{d}{#1}}
\newtheorem{theorem}{Theorem}[section]
\newtheorem{lemma}[theorem]{Lemma}
\newtheorem{proposition}[theorem]{Proposition}
\newtheorem{remark}{Remark}
\newtheorem{definition}[theorem]{Definition}
\title{Covariance Steering for Discrete-Time Linear-Quadratic\\ Stochastic Dynamic Games}
\author{
  Venkata Ramana Makkapati%
  \thanks{School of Aerospace Engineering, Georgia Institute of Technology, 		Atlanta, GA 30332-0150, USA. Email: \texttt{\small \{mvramana, tsiotras\}@gatech.edu}}
  \and%
  Tanmay Rajpurohit%
  \thanks{Genpact Innovation Center, Palo Alto, CA 94304, USA. Email: \texttt{\small tanmay.rajpurohit@genpact.digital}}
  \and%
  Kazuhide Okamoto%
  \thanks{Zoox, Foster City, CA 94404, USA.}
  \and%
  Panagiotis Tsiotras\footnotemark[1]%
}
\begin{document}

\maketitle

\begin{abstract}                          
This paper addresses the problem of steering a discrete-time linear dynamical system from an initial Gaussian distribution to a final distribution in a game-theoretic setting. 
One of the two players strives to minimize a quadratic payoff, while at the same time tries to meet a given mean and covariance constraint at the final time-step.
The other player maximizes the same payoff, but it is assumed to be \emph{indifferent} to the terminal constraint.
At first, the unconstrained version of the game is examined, and the necessary conditions for the existence of a saddle point are obtained.
We then show that obtaining a solution for the \emph{one-sided} constrained dynamic game is not guaranteed, and subsequently the players' best responses are analyzed.
Finally, we propose to numerically solve the problem of steering the distribution under adversarial scenarios using the Jacobi iteration method.
The problem of guiding a missile during the endgame is chosen to analyze the proposed approach.
A numerical simulation corresponding to the case where the terminal distribution is not achieved is also included, and discuss the necessary conditions to meet the terminal constraint.
\end{abstract}


\section{Introduction}
	
	Stochastic games, introduced by Shapley in 1953, deal with instances where a stochastic process is jointly controlled by two players, a \emph{controller} and a \emph{stopper}, along with an underlying payoff function that is common to both players~\cite{shapley1953stochastic}. 
	The stopper tries to maximize the payoff function, while the controller strives to minimize it.
	The current work addresses a class of linear-quadratic (LQ) stochastic dynamic games in discrete-time with finite-time horizon. 
	It is assumed that the players have perfect measurement of the state at each time instant and that the initial state is sampled from a given Gaussian distribution.
	First, the problem of steering the covariance in an LQ game setting without any constraints is analyzed, and the associated saddle point equilibrium is identified. 
	Subsequently, the problem of steering the initial distribution to a specified terminal distribution (which is also Gaussian) under adversarial situations, which can be categorized as a \emph{general constrained game} (GCG) \cite{Altman2009}, is considered.
	This \emph{constrained covariance steering game} (CCSG) is relevant in the context of stochastic pursuit-evasion and has applications in spacecraft rendezvous \cite{Repperger1972}, collision avoidance\cite{jungsu2018}, and understanding behavioral patterns in nature \cite{free2018}.
	
	\subsection{Related Work}
	
	Prior work on stochastic dynamic games considered continuous-time linear systems with players having noisy measurements \cite{Speyer1967, Ho1968, Leondes1979, Bagchi1981, Willman1969, CASTANON1976}.
	In one of the early works, Speyer discussed the outcomes of a linear stochastic differential game in a defense setting with a missile and a radar \cite{Speyer1967}. 
	The radar's objective is to minimize the uncertainty in the current state of the missile using a filter, and the missile strives to maximize the same, while also trying to arrive at a target point with certain accuracy. 
	Further, the missile has linear dynamics and adds additive noise to its control in order to confuse its adversary. 
	Subsequently, zero-sum LQ differential game theory with noise-corrupted measurements was applied to study the pursuit-evasion setting involving a homing missile and an evading aircraft \cite{Leondes1979}.
	Quasi-linearization was used to solve the nonlinear two-point boundary-value problem with closed-form solutions.
	At the same time, Casta{\~n}{\'o}n and Athans derived feedback strategies for a two-person Stackelberg game with quadratic performance cost and linear dynamics \cite{CASTANON1976}.
	Kumar and van Schuppen considered different observations and costs for the two players \cite{Kumar1980}. 
	They obtained strategies for the players assuming that one of the players had a ``spy" creating an instance of asymmetric information.
	Bley and Stear studied discrete stochastic dynamic games in $\mathbb{R}$ \cite{BLEY1971}.
	Yavin analyzed various pursuit-evasion problems in the stochastic setting and provided sufficient conditions for the players' optimal strategies \cite{YAVIN1984,YAVIN1987}.
	Bernhard and Colomb provided a solution for the rabbit and the hunter game in a partial information setting, using dynamic programming \cite{Bernhard1988}.
	
	In recent years, stochastic games with different information structures, where the state transition and observation equations are linear, have been extensively studied \cite{Gupta2014, Basar2014, Swarup2003, Clemens2017, Rhodes1969, Chong1971}. 
	Rajpurohit et al. studied a two-player stochastic differential game with a nonlinear dynamical model over an infinite time horizon \cite{rajpurohit2017}.
	Additional results and applications for stochastic differential games can be found in Ref. \cite{SDG_book}.
	
	Owing to the fact that a Gaussian distribution can be fully defined using its first two moments, the problem discussed in this paper can be decomposed into mean and covariance steering problems \cite{okamoto2018Optimal, okamoto2019}. The mean steering problem is essentially a deterministic dynamic game. The necessary and sufficient conditions for the existence of a solution to the discrete-time LQ dynamic game was provided by Pachter and Pham, along with a closed-form solution \cite{Pachter2010}.
	
	The idea of covariance steering has its genesis in the 1980s.
	First introduced by Hotz and Skelton \cite{hotz1985covariance}, the problem of infinite-horizon covariance assignment for continuous and discrete-time systems has been analyzed by various researchers \cite{iwasaki1992quadratic, xu1992improved, grigoriadis1997minimum, Collins1987}.
	The finite-horizon equivalent of the problem in continuous-time was investigated only recently by Chen et al. \cite{chen2016optimalI, chen2016optimalII, chen2018optimalIII}, where it was shown that the related solutions have theoretical connections to the Schr{\"o}dinger bridges \cite{Schrodinger1931} and the optimal mass transport problems \cite{Kantorovich1942}.
	The solution to the problem of covariance steering in finite time is also of great importance to entry, descent, and landing problems \cite{ridderhof2018uncertainty}.
	Previous works have shown that these solutions can be seen in the context of LQG with a particular set of weights which can be solved in terms of LMIs \cite{halder2016finite, goldshtein2017finite, bakolas2016optimalACC, bakolas2016optimalCDC, bakolas2018finite}.
	
	\subsection{Contributions}
	
	The contributions of this work are as follows.
	\begin{enumerate}
		\item A novel LQ formulation for driving a Gaussian to a given terminal distribution under an adversarial setting is introduced. The adversary is assumed to be indifferent to the controller's terminal constraint which is unique to the literature on covariance steering.
		
		\item It is shown that the proposed game theoretic formulation can be decomposed into two independent games, mean steering and covariance steering games, which makes the problem tractable.
		
		\item The existence of equilibrium solutions is discussed for both unconstrained and constrained versions of the games.
		
		\item A condition in terms of \emph{relative controllability} is identified in the mean steering game with controller constraints for discrete systems.
		
		\item A simple Jacobi procedure for finding saddle points is introduced to solve the constrained covariance steering game, assuming a linear feedback control structure.
		
		\item The missile endgame guidance scenario is revisited, while assuming a process noise in the system, to demonstrate the proposed approach.
	\end{enumerate}
	
	At this point, it is worth mentioning that the attitude of a player towards its opponent's constraint influences the outcome of the GCG \cite{Altman2009}.
	Instances where the players are indifferent to couple constraints of their opponents can be found in mobile networks and finance \cite{Altman2009, perlaza2011quality}. 
	For example, in the situation where there are multiple mobile carriers competing to maximize the received power in a series of time slots, the networks are also subjected to a minimum expected throughput.
    Note that in a given time slot, only one carrier is successful, and the overall success of a network depends on the actions of all other mobiles, indicating coupled constraints with players being indifferent to their opponents' constraints.	
	In the case where a player's main goal is to prevent the opponent from meeting its constraint, his attitude is to be understood as being aggressive. 
	Analyzing the scenario where the stopper has an aggressive attitude towards the controller's constraint is beyond the scope of this work.
	
	The contents of the paper are as follows. 
	Section \ref{sec:prelims} establishes the notation and introduces the mathematical preliminaries, including some necessary definitions and existing results.
	The problem statement dealt in this paper is also included in Section \ref{sec:prelims}.
	In Section \ref{sec:separate}, the Gaussian steering problem under adversarial settings is separated into its corresponding mean and covariance steering problems, which are subsequently analyzed in Sections \ref{sec:mean} and \ref{sec:covariance}, respectively.
	Section \ref{sec:sims} presents the numerical simulations, and analyzes covariance steering in the context of missile endgame guidance problem.
	Finally, Section \ref{sec:conclude} concludes the paper.


\section{Mathematical Preliminaries}
	\label{sec:prelims}
	
	The notation used in this paper is as follows.
	The rank of a matrix $A$ is denoted as $\text{rank}[A]$, and $e_i$ denotes the elementary vector, where the operation $e^\top_iAe_j$ retains the $(i,j)$ element of matrix $A$.
	$A^{-1}$ denotes the inverse of matrix $A$.
	$I_n$ and $0_{n \times m}$ denote an identity matrix of size $n\times n$, and a zero matrix of size $n \times m$, respectively (the subscripts will be omitted when obvious from the context).
	The trace and determinant of a square matrix are denoted as $\text{tr}(\cdot)$ and $\text{det}(\cdot)$, respectively.
	The positive definiteness of a symmetric matrix $R$ is denoted as $R \succ 0$, and positive semi-definiteness is denoted as $R \succeq 0$.
	A random variable $x$ with normal distribution is denoted as $x \sim \mathcal{N}(\mu,\Sigma)$, where $\mu$ is its mean, and $\Sigma$ is its covariance matrix. 
	Finally, $\mathbb{E}[\cdot]$ denotes the expectation of a random variable. 
	
	Consider the following discrete-time linear stochastic system
	\begin{align}
	x_{k+1} = A_k x_k + B_k u_k + C_k v_k + D_k w_k, \label{eq:ini_system}
	\end{align}
	where $k = 0,~1,\dots,N-1$ is the time-step. At the $k^{\text{th}}$ time-step, $x_k \in \mathbb{R}^n$ denotes the state, $u_k \in \mathbb{R}^m$ is the controller input, $v_k \in \mathbb{R}^\ell$ is the stopper input, and $w_k \in \mathbb{R}^r$ is a zero-mean white Gaussian noise with unit covariance, i.e.
	\begin{align}
	\mathbb{E}[w_k] = 0, \quad \mathbb{E}[w_{k_1}w_{k_2}^\top] = \begin{cases}
	I_r, &\text{if } k_1 = k_2,\\
	0, &\text{otherwise}.
	\end{cases}
	\end{align}
	In addition, it is assumed that
	\begin{align}
	\mathbb{E}[x_{k_1}w_{k_2}^\top] = 0, \quad 0 \leq k_1 \leq k_2 \leq N.
	\end{align}
	The initial state $x_0$ is distributed according to $x_0 \sim \mathcal{N}(\mu_0,\Sigma_0)$,
	where $\mu_0 \in \mathbb{R}^{n}$ is the initial state mean, and $\Sigma_0 \in \mathbb{R}^{n \times n}$ is the initial state covariance, with $\Sigma_0 \succeq 0$. The payoff function is
	\begin{align}
	J(u_0,\dots,u_{N-1},v_0,\dots,v_{N-1}) = \mathbb{E}\left[\sum_{k=0}^{N-1} \left(x^\top_kQ_kx_k + u^\top_kR_ku_k - v^\top_kS_kv_k\right)\right]. \label{eq:ini_cost}
	\end{align}
	It is assumed that $Q_k \succeq 0$  for all $k = 0,\dots,N$, and $R_k,S_k \succ 0$ for all $k = 0,\dots,N-1$. The set of control inputs $\{u_0,\dots,u_{N-1}\}$ is chosen by one player to minimize the payoff function (\ref{eq:ini_cost}), and the control inputs $\{v_0,\dots,v_{N-1}\}$, are chosen by the adversary to maximize (\ref{eq:ini_cost}).
	
	Using the notation introduced in \cite{goldshtein2017finite}, the system dynamics in (\ref{eq:ini_system}) can be alternatively expressed as 
	\begin{align}
	x_k = \bar{A}_kx_0 + \bar{B}_kU_k + \bar{C}_kV_k + \bar{D}_kW_k, 	\label{eq:inter_sys}
	\end{align}
	where $U_k = [u_0,~u_1,\dots,u_{k-1}]^\top$, $V_k = [v_0,~v_1,\dots,v_{k-1}]^\top$, $W_k = [w_0,~w_1,\dots,w_{k-1}]^\top$ are the augmented control and noise profiles. 
	Furthermore, with the augmented state vector $X = [x_1,~x_2,\dots,x_N]^\top$, the system dynamics (\ref{eq:ini_system}) can be rewritten as
	\begin{align}
	X = \mathcal{A}x_0 + \mathcal{B}U + \mathcal{C}V + \mathcal{D}W, \label{eq:modi_sys}
	\end{align}
	where $U = U_{N_1}$, $V = V_{N_1}$, and $W = W_{N_1}$. The definitions of the big matrices ($A_k$, $\mathcal{A}, \dots$) can be found in \cite{goldshtein2017finite}.
Note that $\mathbb{E}[x_0x_0^\top] = \Sigma_0 + \mu_0\mu_0^\top,$ $\mathbb{E}[x_0W^\top] = 0,$ $\mathbb{E}[WW^\top] = I.$
Consequently, the payoff function in (\ref{eq:ini_cost}) can be expressed as
	\begin{align}
	J(U,V) = \mathbb{E}[X^\top\bar{Q}X + U^\top\bar{R}U - V^\top\bar{S}V], \label{eq:modi_cost}
	\end{align}
	where $\bar{Q} = \text{blkdiag}(Q_0,\dots,Q_{N-1},0) \in \mathbb{R}^{(N+1)n \times (N+1)n}$, $\bar{R} = \text{blkdiag}(R_0,R_1,\dots,R_{N-1}) \in \mathbb{R}^{Nm \times Nm}$, and $\bar{S} = \text{blkdiag}(S_0,S_1,\dots,S_{N-1}) \in \mathbb{R}^{N\ell \times N\ell}$. Also, since $Q_k \succeq 0$ for all $k = 0,\dots,N$, and $R_k,S_k \succ 0$ for all $k = 0,\dots,N-1$, it follows that $\bar{Q} \succeq 0$ and $\bar{R},\bar{S} \succ 0$.
The mean and the covariance of the initial state $x_0$ can be written in terms of $X$ as
	\begin{subequations}\label{eq:boun_start}
		\begin{align}
		\mu_0 &= E_0\mathbb{E}[X], \\ 
		\Sigma_0 &= E_0(\mathbb{E}[XX^\top] - \mathbb{E}[X]\mathbb{E}[X]^\top)E^\top_0,
		\end{align}
	\end{subequations}
	where $E_0 \triangleq [I_{n},0,\dots,0] \in \mathbb{R}^{n\times(N+1)n}$.

	\vspace{0.1in}
	
	\begin{definition}
		The \emph{upper game} is a scheme in which the stopper chooses $V$ based on the information it has on the control $U$, and the \emph{upper value} is defined by
		\begin{align}
		\mathcal{V}^+ = \underset{U \in \mathbb{R}^{Nm}}{\inf}~\underset{V \in \mathbb{R}^{N\ell}}{\sup}~J(U,V).
		\end{align}
		Similarly, the \emph{lower game} is a scheme in which the controller chooses $U$ based on the information it has on the control $V$, and the \emph{lower value} is defined by
		\begin{align}
		\mathcal{V}^- = \underset{V \in \mathbb{R}^{N\ell}}{\sup}~\underset{U \in \mathbb{R}^{Nm}}{\inf}~J(U,V).
		\end{align}
	\end{definition}
	
	\vspace{0.1in}
	
	It is well known that, in general $\mathcal{V}^- \leq \mathcal{V}^+$. If the \emph{Isaacs minimax condition} holds, then $\mathcal{V}^- = \mathcal{V}^+$, and the corresponding set of control actions $(U^*,V^*)$ is called the equilibrium solution or saddle point \cite{fleming1989existence}. 
	The unconstrained Gaussian steering problems to be addressed in this paper can now be stated as follows.
	
	\vspace{0.1in}
	
	\textbf{Problem 1.} Find the saddle point ($U^*,V^*$) for the \emph{unconstrained dynamic game} (UDG), described by the payoff function (\ref{eq:modi_cost}), the system (\ref{eq:modi_sys}), and the initial conditions (\ref{eq:boun_start}). 
	
	\vspace{0.1in}
	
	In this paper, as mentioned earlier, we propose to analyze the one-sided contrained dynamic game.
	To this end, let 
	\begin{align}
	E_NX = x_N \sim \mathcal{N}(\mu_N,\Sigma_N), \label{eq:boun_final}
	\end{align}
	where $E_N \triangleq [0,\dots,0,I_n] \in \mathbb{R}^{n\times(N+1)n}$, be terminal state that the controller strives to achieve at the final time-step.
	Note that it is only the controller who is concerned about meeting the terminal condition (\ref{eq:boun_final}), and hence (\ref{eq:boun_final}) is a one-sided constraint. 
	It is assumed that the stopper is aware of the controller's terminal constraint however, it is indifferent to this constraint, and it is solely interested in maximizing the payoff (\ref{eq:modi_cost}).
	Furthermore, since the terminal constraint (\ref{eq:boun_final}) is dependent on the control inputs of both players, and it is a one-sided constraint, the problem of interest can be categorized as a GCG \cite{Altman2009}.
	
	\vspace{0.1in}
	
	\begin{remark}
		The terminal condition (\ref{eq:boun_final}) can be used to enforce probabilistic capture in the case of a two-player pursuit-evasion game with $\mu_N = 0$, when (\ref{eq:ini_system}) represents the relative motion between the pursuer and the evader.
	\end{remark}

	\vspace{0.1in}
	
	We will now formally define the saddle point in the one-sided constrained dynamic game using the corresponding upper and lower values. For a given stopper action $V$, let $\mathcal{U}(V)$ denotes the set of controllers $U \in \mathbb{R}^{Nm}$ that drive the system to terminal Gaussian distribution in (\ref{eq:boun_final}), and let $\mathcal{R} \triangleq \bigcup_{V \in \mathbb{R}^{N\ell}}~\mathcal{U}(V) \subseteq \mathbb{R}^{Nm}$.
	
	\vspace{0.1in}
	
	\begin{definition}\label{def:constr_values}
		The \emph{constrained upper value} is defined by
		\begin{align}
		\mathcal{V}_c^+ = \underset{U \in \mathbb{R}^{Nm}}{\inf}~\underset{V \in \mathbb{R}^{N\ell}}{\sup}~J(U,V),
		\end{align}
		and the \emph{constrained lower value} is defined by
		\begin{align}
		\mathcal{V}_c^- = \underset{V \in \mathbb{R}^{N\ell}}{\sup}~\underset{U \in \mathcal{R}}{\inf}~J(U,V).
		\end{align}
	\end{definition}
	
	\vspace{0.1in}
	
	The existence of the constrained upper and lower values requires that the controller meets the terminal constraint in (\ref{eq:boun_final}) in the corresponding upper and lower games. Given the system dynamics (\ref{eq:modi_sys}) and the initial conditions (\ref{eq:boun_start}), note that for some $V$, there may not exist a controller such that the terminal condition (\ref{eq:boun_final}) can be met i.e., $\mathcal{U}(V) = \emptyset$. Consequently, there may not exist a constrained upper (or lower) value for the constrained game. Finally, a saddle point in the constrained game can be defined as ($U_c^*,V_c^*$) for which the $\mathcal{V}_c^+$ and $\mathcal{V}_c^-$ exist, and are equal. 
	
	\vspace{0.1in}
	
	\textbf{Problem 2.} Find the necessary conditions such that the controller can drive the system to the final state, while the stopper tries to maximize the payoff function (\ref{eq:modi_cost}), given the system dynamics (\ref{eq:modi_sys}) and the initial conditions (\ref{eq:boun_start}). 
	Furthermore, find the optimal control inputs for both players. 
	Hereafter, this problem will be referred to as the \emph{constrained dynamic game} (CDG).


\section{Separation of Mean and Covariance Control Problems}
	\label{sec:separate}
	
	In  \cite{okamoto2018Optimal}, it was demonstrated that the mean and the covariance evolutions of the system can be separated. Subsequently, by separating the cost, independent controllers that drive the mean and the covariance were derived. A similar approach is followed here by first observing the fact that
	\begin{align}
	\mu_k \triangleq \mathbb{E}[x_k] = \bar{A}_k\mu_0 + \bar{B}_k\bar{U}_k + \bar{C}_k\bar{V}_k, \label{eq:int_muevol}
	\end{align}
	where $\bar{U}_k = \mathbb{E}[U_k]$ and $\bar{V}_k = \mathbb{E}[V_k]$. By defining 
	\begin{align}
	\tilde{x}_k \triangleq x_k - \mu_k, \quad \tilde{U}_k \triangleq U_k - \bar{U}_k, \quad \tilde{V}_k \triangleq V_k - \bar{V}_k, \label{eq:decomposition}
	\end{align}
	and using (\ref{eq:inter_sys}), the following equation holds for $\tilde{x}_k$.
	\begin{align}
	\tilde{x}_k = \bar{A}_k\tilde{x}_0 + \bar{B}_k\tilde{U}_k + \bar{C}_k\tilde{V}_k + \bar{D}_kW_k. \label{eq:xtilde}
	\end{align}
	Subsequently,
	\begin{align}
	\Sigma_k &\triangleq \mathbb{E}[\tilde{x}_k\tilde{x}_k^\top] \nonumber \\
	&= \mathbb{E}\left[\left(\bar{A}_k\tilde{x}_0 + \bar{B}_k\tilde{U}_k + \bar{C}_k\tilde{V}_k + \bar{D}_kW_k\right) \left(\bar{A}_k\tilde{x}_0 + \bar{B}_k\tilde{U}_k + \bar{C}_k\tilde{V}_k + \bar{D}_kW_k\right)^\top\right].  \label{eq:int_covevo}
	\end{align}
	It can be observed that the mean evolution in (\ref{eq:int_muevol}) depends only on $\bar{U}_k$, $\bar{V}_k$, while the evolution of $\tilde{x}$ in (\ref{eq:xtilde}), and the covariance evolution in (\ref{eq:int_covevo}) depend only on $\tilde{U}_k$, $\tilde{V}_k$, and the noise profile $W_k$. Consequently, from (\ref{eq:modi_sys}) and (\ref{eq:int_muevol}), it follows that
	\begin{align}
	\bar{X} \triangleq \mathbb{E}[X] = \mathcal{A}\mu_0 + \mathcal{B}\bar{U} + \mathcal{C}\bar{V},
	\end{align}
	and from (\ref{eq:xtilde}),
	\begin{align}
	\tilde{X} \triangleq X - \mathbb{E}[X] = \mathcal{A}\tilde{x}_0 + \mathcal{B}\tilde{U} + \mathcal{C}\tilde{V} + \mathcal{D}W. \label{eq:Xtilde}
	\end{align}
	The objective function (\ref{eq:modi_cost}) can be further rewritten as
	\begin{align}
	J(U,V) &= \mathbb{E}[X^\top\bar{Q}X + U^\top\bar{R}U - V^\top\bar{S}V] \nonumber \\
	&= \text{tr}(\bar{Q}\mathbb{E}[\tilde{X} \tilde{X}^\top]) + \bar{X}^\top \bar{Q}\bar{X} + \text{tr}(\bar{R}\mathbb{E}[\tilde{U} \tilde{U}^\top]) + \bar{U}^\top \bar{R}\bar{U} - \text{tr}(\bar{S}\mathbb{E}[\tilde{V} \tilde{V}^\top]) - \bar{V}^\top \bar{S}\bar{V} \nonumber \\
	&= J_\mu(\bar{U},\bar{V}) + J_\Sigma(\tilde{U},\tilde{V}), \label{eq:cost_decompose}
	\end{align}
	where
	\begin{align}
	J_\mu(\bar{U},\bar{V}) = \bar{X}^\top \bar{Q}\bar{X} + \bar{U}^\top \bar{R}\bar{U} - \bar{V}^\top \bar{S}\bar{V}, \label{eq:mean_cost}
	\end{align}
	and
	\begin{align}
	J_\Sigma(\tilde{U},\tilde{V}) = \text{tr}(\bar{Q}\mathbb{E}[\tilde{X} \tilde{X}^\top]) + \text{tr}(\bar{R}\mathbb{E}[\tilde{U} \tilde{U}^\top]) - \text{tr}(\bar{S}\mathbb{E}[\tilde{V} \tilde{V}^\top]). \label{eq:cov_cost}
	\end{align}
	
	\vspace{0.1in}
	
	\begin{proposition} \label{prop:sep_meacov}
		For the UDG, the saddle point controls $(U^*,V^*)$ that solve the problem (if they exist) are given by $U^* = \bar{U}^* + \tilde{U}^*$ and $V^* = \bar{V}^* + \tilde{V}^*$, where $(\bar{U}^*,\bar{V}^*)$ solves the \emph{unconstrained mean steering game} 
		\begin{align}
		\text{(UMSG)}\begin{cases}
		\text{Payoff function: }J_\mu(\bar{U},\bar{V}), \\
		\text{where } \bar{X} = \mathcal{A}\mu_0 + \mathcal{B}\bar{U} + \mathcal{C}\bar{V},
		\end{cases} \label{eq:mean_game}
		\end{align}
		and $(\tilde{U}^*,\tilde{V}^*)$ solves the \emph{unconstrained covariance steering game}
		\begin{align}
		\text{(UCSG)}\begin{cases}
		\text{Payoff function: }J_\Sigma(\tilde{U},\tilde{V}), \\
		\text{where } \tilde{X} = \mathcal{A}\tilde{x}_0 + \mathcal{B}\tilde{U} + \mathcal{C}\tilde{V} + \mathcal{D}W.
		\end{cases} \label{eq:cov_game}
		\end{align}
	\end{proposition}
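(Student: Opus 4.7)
The plan is to exploit the additive decomposition of the cost already established in (\ref{eq:cost_decompose}), namely $J(U,V) = J_\mu(\bar{U},\bar{V}) + J_\Sigma(\tilde{U},\tilde{V})$, which cleanly separates the decision variables into disjoint groups: the deterministic means $(\bar{U},\bar{V})$ enter only through $J_\mu$, while the zero-mean fluctuations $(\tilde{U},\tilde{V})$ enter only through $J_\Sigma$. Since every admissible $U$ factors uniquely as $U = \bar{U} + \tilde{U}$ with $\bar{U} = \mathbb{E}[U]$ and $\mathbb{E}[\tilde{U}] = 0$ (and analogously for $V$), choosing $U$ is equivalent to independently choosing its mean component and its fluctuation component, and likewise for the opposing player. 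This orthogonal parametrization of the admissible control spaces is the backbone of the argument.

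First I would handle the sufficiency direction. Suppose $(\bar{U}^*,\bar{V}^*)$ is a saddle point of the UMSG and $(\tilde{U}^*,\tilde{V}^*)$ is a saddle point of the UCSG; writing down the two resulting pairs of saddle-point inequalities and adding them termwise, then invoking (\ref{eq:cost_decompose}), produces the single chain $J(U^*,V) \le J(U^*,V^*) \le J(U,V^*)$ for every admissible $U,V$, with $U^* = \bar{U}^* + \tilde{U}^*$ and $V^* = \bar{V}^* + \tilde{V}^*$, which is exactly the UDG saddle-point condition. For the converse direction, I would assume $(U^*,V^*)$ solves the UDG and argue componentwise: freezing $\bar{V} = \bar{V}^*$ and varying only the fluctuation $\tilde{V}$ in $J(U^*,V) \le J(U^*,V^*)$ reduces, after cancelling the common $J_\mu$ term, to $J_\Sigma(\tilde{U}^*,\tilde{V}) \le J_\Sigma(\tilde{U}^*,\tilde{V}^*)$; freezing $\tilde{V} = \tilde{V}^*$ and perturbing $\bar{V}$ yields the analogous inequality for $J_\mu$. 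Treating the minimizing side symmetrically recovers both individual saddle points.

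The main obstacle I anticipate is justifying that the mean/fluctuation decomposition genuinely uncouples the admissible sets, that is, that the pair $(\bar{U},\tilde{U})$ ranges independently over $\mathbb{R}^{Nm}$ and the space of zero-mean random vectors, so that no hidden coupling between the two subgames is introduced. A secondary verification is that (\ref{eq:cost_decompose}) truly contains no cross terms; this rests on the identities $\mathbb{E}[\tilde{X}] = \mathbb{E}[\tilde{U}] = \mathbb{E}[\tilde{V}] = 0$, which annihilate the mixed contributions when one expands $\mathbb{E}[X^\top \bar{Q} X]$, $\mathbb{E}[U^\top \bar{R} U]$, and $\mathbb{E}[V^\top \bar{S} V]$ around their means. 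Once these two points are settled, the proof collapses to the linear bookkeeping sketched above.
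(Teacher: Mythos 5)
Your proposal is correct and follows essentially the same route as the paper: both rest on the additive decomposition $J(U,V) = J_\mu(\bar{U},\bar{V}) + J_\Sigma(\tilde{U},\tilde{V})$ together with the observation that the mean components and the zero-mean fluctuation components can be chosen independently, so the game splits into two decoupled subgames. The paper's own proof simply asserts this equivalence in one sentence, whereas you make it explicit by adding the two pairs of saddle-point inequalities for sufficiency and by freezing one component at a time for the converse, and you correctly flag the only point that genuinely needs care (the independence of the admissible sets for $(\bar{U},\tilde{U})$ and $(\bar{V},\tilde{V})$), which the paper itself leaves implicit.
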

	
	\vspace{0.1in}
	
	\begin{proof}
		From (\ref{eq:decomposition}), (\ref{eq:cost_decompose}), it can observed that the mean payoff $J_\mu(\bar{U},\bar{V})$ in (\ref{eq:mean_cost}) is driven by the mean control actions $\bar{U}$ and $\bar{V}$ independently while the covariance payoff $J_\Sigma(\tilde{U},\tilde{V})$ in (\ref{eq:mean_cost}) is driven by the covariance control actions $\tilde{U}$ and $\tilde{V}$.
		As a result, the UDG in terms of $(U,V)$ is equivalent to two separate dynamic games in terms of $(\bar{U},\bar{V})$ and $(\tilde{U},\tilde{V})$ with payoff functions (\ref{eq:mean_cost}) and (\ref{eq:cov_cost}), respectively, leading to the result.
		
	\end{proof}

	\vspace{0.1in}
	
	\begin{proposition}\label{prop:constr_game_decompose}
		Proposition \ref{prop:sep_meacov} applies to the CDG as well, with $U^*_c = \bar{U}^*_c + \tilde{U}^*_c$, $V^*_c = \bar{V}^*_c + \tilde{V}^*_c$, where $(\bar{U}_c^*,\bar{V}_c^*)$ solves the \emph{constrained mean steering game} (CMSG)
		\begin{subequations}
			\begin{numcases}{}
			\text{Payoff function: }J_\mu(\bar{U},\bar{V}), \\
			\text{where } \bar{X} = \mathcal{A}\mu_0 + \mathcal{B}\bar{U} + \mathcal{C}\bar{V},\nonumber \\
			\text{Controller constraint:} \nonumber \\
			\mu_N = E_N\bar{X} = \bar{A}_N\mu_0 + \bar{B}_N\bar{U} + \bar{C}_N \bar{V}, \label{eq:org_meancont}
			\end{numcases}\label{eq:const_mean_game}
		\end{subequations}
		and $(\tilde{U}_c^*,\tilde{V}_c^*)$ solves the \emph{constrained covariance steering game} (CCSG) with
		\begin{subequations}
			\begin{numcases}{}
			\text{Payoff function: }J_\Sigma(\tilde{U},\tilde{V}), \\
			\text{where } \tilde{X} = \mathcal{A}\tilde{x}_0 + \mathcal{B}\tilde{U} + \mathcal{C}\tilde{V} + \mathcal{D}W, \nonumber \\
			\text{Controller constraint: } \nonumber \\
			\Sigma_N = E_N \left(\mathbb{E}[XX^\top] - \mathbb{E}[X]\mathbb{E}[X]^\top\right) E_N^\top, \label{eq:org_covcont}
			\end{numcases}\label{eq:const_cov_game}
		\end{subequations}
		where the constraints (\ref{eq:org_meancont}) and (\ref{eq:org_covcont}), as stated earlier, are of concern only for the controller.
	\end{proposition}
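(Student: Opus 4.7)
The plan is to lift the decomposition established in Proposition \ref{prop:sep_meacov} to the constrained setting by showing that (i) the payoff, (ii) the dynamics, and (iii) the terminal Gaussian constraint all split cleanly along the mean/zero-mean decomposition $U = \bar{U} + \tilde{U}$, $V = \bar{V} + \tilde{V}$. Once these three ingredients decouple, the constrained game reduces to two independent constrained subgames and the saddle point assembles componentwise.

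First I would carry over directly from Proposition \ref{prop:sep_meacov} the facts $J(U,V) = J_\mu(\bar{U},\bar{V}) + J_\Sigma(\tilde{U},\tilde{V})$ and the independent evolutions $\bar{X} = \mathcal{A}\mu_0 + \mathcal{B}\bar{U} + \mathcal{C}\bar{V}$ and $\tilde{X} = \mathcal{A}\tilde{x}_0 + \mathcal{B}\tilde{U} + \mathcal{C}\tilde{V} + \mathcal{D}W$. Next I would observe that the terminal constraint $E_N X = x_N \sim \mathcal{N}(\mu_N,\Sigma_N)$ is itself equivalent to the pair of conditions $\mathbb{E}[x_N] = \mu_N$ and $\mathrm{Cov}(x_N) = \Sigma_N$, since $x_N$ is an affine function of Gaussian quantities and therefore Gaussian once the inputs are deterministic (or linear feedback) functions of $x_0$ and $W$. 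Writing $\mathbb{E}[x_N] = E_N \bar{X} = \bar{A}_N\mu_0 + \bar{B}_N\bar{U} + \bar{C}_N\bar{V}$ gives exactly the constraint (\ref{eq:org_meancont}), which depends only on the mean inputs; and writing $\mathrm{Cov}(x_N) = E_N\mathbb{E}[\tilde{X}\tilde{X}^\top]E_N^\top$ in the form (\ref{eq:org_covcont}) gives a constraint depending only on the zero-mean inputs $\tilde{U},\tilde{V}$ (together with $W$).

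With these splits in hand, I would argue equivalence of the games as follows. For any $(U,V)$, denote $\bar{U} = \mathbb{E}[U]$, $\tilde{U} = U - \bar{U}$ (and likewise for $V$); the maps $U \mapsto (\bar{U},\tilde{U})$ and $V \mapsto (\bar{V},\tilde{V})$ are bijections between the original strategy spaces and the product spaces on which the two subgames are played. Under this bijection the constrained feasibility set factors: $U \in \mathcal{U}(V)$ if and only if $\bar{U}$ satisfies the mean constraint against $\bar{V}$ and $\tilde{U}$ satisfies the covariance constraint against $\tilde{V}$. Combining this with the additive decomposition of $J$ shows that any $\inf$-$\sup$ or $\sup$-$\inf$ operation factors through the product, so that $\mathcal{V}_c^\pm$ of the CDG equals the sum of the corresponding values of the CMSG and CCSG, and saddle points assemble as $U_c^* = \bar{U}_c^* + \tilde{U}_c^*$, $V_c^* = \bar{V}_c^* + \tilde{V}_c^*$.

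The main obstacle I expect is the careful handling of the constraint set $\mathcal{R}$ and of the stopper's indifference: one must make sure that, because neither player's constraint depends on both $(\bar{U},\bar{V})$ and $(\tilde{U},\tilde{V})$ jointly, the two min-max operations can indeed be separated without creating spurious coupling through feasibility. Concretely, one must verify that $\mathcal{U}(V)$ is the Minkowski sum of the mean-feasible set (depending only on $\bar{V}$) and the covariance-feasible set (depending only on $\tilde{V}$), and that the stopper's indifference to (\ref{eq:boun_final}) lets the outer $\sup$ range over all $(\bar{V},\tilde{V})$. Once this product structure of $\mathcal{R}$ is established, the rest of the argument is an immediate recapitulation of the proof of Proposition \ref{prop:sep_meacov} applied separately to (\ref{eq:const_mean_game}) and (\ref{eq:const_cov_game}).
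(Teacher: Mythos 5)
Your proposal is correct and follows essentially the same route as the paper: the paper's own proof simply observes that the payoff splits additively as $J_\mu + J_\Sigma$, and that the mean constraint (\ref{eq:org_meancont}) depends only on $(\bar{U},\bar{V})$ while the covariance constraint (\ref{eq:org_covcont}) depends only on $(\tilde{U},\tilde{V})$, so the constrained game decouples. Your additional care in checking that the terminal Gaussian condition is equivalent to separate mean and covariance constraints and that the feasible set factors as a product is a more explicit rendering of the same argument, not a different one.
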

	
	\vspace{0.1in}
	
	\begin{proof}
	The proof is similar to the one given for Proposition \ref{prop:sep_meacov}
		From (\ref{eq:decomposition}), (\ref{eq:cost_decompose}), it can observed that the mean payoff $J_\mu(\bar{U},\bar{V})$ in (\ref{eq:mean_cost}) is driven by mean control actions $\bar{U}$ and $\bar{V}$, while the covariance payoff $J_\Sigma(\tilde{U},\tilde{V})$ in (\ref{eq:mean_cost}) is driven by covariance control actions $\tilde{U}$ and $\tilde{V}$, independently.
		Furthermore, the mean and the covariance control actions address the constraints (\ref{eq:org_meancont}) and (\ref{eq:org_covcont}) too in an independent fashion.
		As a result, the CDG in terms of $(U,V)$ is equivalent to two separate dynamic games in (\ref{eq:const_mean_game}) and (\ref{eq:const_cov_game}) in terms of $(\bar{U},\bar{V})$ and $(\tilde{U},\tilde{V})$, respectively, leading to the result.
	\end{proof}
	
	\vspace{0.1in}
	
	Note that non-existence of saddle point in either CMSG or CCSG or both, implies non-existence of saddle point in CDG. 
	For the analysis of mean steering game in the following section, we introduce the set $\bar{\mathcal{R}} $.
	For a given stopper action $\bar{V}$ in CMSG, let $\bar{\mathcal{U}}(\bar{V})$ denotes the set of mean controllers $\bar{U} \in \mathbb{R}^{Nm}$ that satisfies the constraint in (\ref{eq:org_meancont}), and let $\bar{\mathcal{R}} \triangleq \bigcup_{\bar{V} \in \mathbb{R}^{N\ell}}~\bar{\mathcal{U}}(\bar{V}) \subseteq \mathbb{R}^{Nm}$.


\section{Mean Steering Game}
	\label{sec:mean}
	
	The solution to the UMSG is given in the following proposition.
	
	\vspace{0.1in}
	
	\begin{proposition} \label{prop:umsg}
		Assume that
		\begin{align}
		\bar{S} - \mathcal{C}^\top \bar{Q} \mathcal{C} \succ 0, \label{eq:mean_assump}
		\end{align}
		then the saddle point $(\bar{U}^*,\bar{V}^*)$ that solves the UMSG (\ref{eq:mean_game}) is given by
		\begin{align}
		\left[\begin{array}{c}
		\bar{U}^* \\ \bar{V}^* 
		\end{array}\right] = -\left[\begin{array}{cc}
		\mathcal{B}^\top\bar{Q}\mathcal{B} + \bar{R} & \mathcal{B}^\top\bar{Q}\mathcal{C} \\
		\mathcal{C}^\top\bar{Q}\mathcal{B} & \mathcal{C}^\top\bar{Q}\mathcal{C} - \bar{S}
		\end{array}\right]^{-1}\left[\begin{array}{c}
		\mathcal{B}^\top\bar{Q}\mathcal{A} \\ \mathcal{C}^\top\bar{Q}\mathcal{A}
		\end{array}\right] \mu_0 \label{eq:mean_sol}
		\end{align}	
		and this solution is unique.
	\end{proposition}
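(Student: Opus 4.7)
The plan is to substitute the mean dynamics $\bar{X} = \mathcal{A}\mu_0 + \mathcal{B}\bar{U} + \mathcal{C}\bar{V}$ into the cost $J_\mu$ of (\ref{eq:mean_cost}) and to recognize that the resulting expression is a quadratic function of the pair $(\bar{U},\bar{V})$ parametrized by $\mu_0$. The Hessian block in $\bar{U}$ is $\mathcal{B}^\top\bar{Q}\mathcal{B}+\bar{R}$, which is positive definite because $\bar{R}\succ 0$ and $\bar{Q}\succeq 0$; the Hessian block in $\bar{V}$ is $\mathcal{C}^\top\bar{Q}\mathcal{C}-\bar{S}$, which is negative definite by hypothesis (\ref{eq:mean_assump}). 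Hence $J_\mu$ is strictly convex in $\bar{U}$ for every fixed $\bar{V}$ and strictly concave in $\bar{V}$ for every fixed $\bar{U}$. This convex--concave structure will guarantee that any interior critical point is a saddle point and that it is unique.

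Next I would apply the first-order conditions $\nabla_{\bar{U}} J_\mu = 0$ and $\nabla_{\bar{V}} J_\mu = 0$, which after collecting terms yield the coupled linear system
\begin{align*}
(\mathcal{B}^\top\bar{Q}\mathcal{B}+\bar{R})\bar{U}+\mathcal{B}^\top\bar{Q}\mathcal{C}\bar{V} &= -\mathcal{B}^\top\bar{Q}\mathcal{A}\mu_0,\\
\mathcal{C}^\top\bar{Q}\mathcal{B}\bar{U}+(\mathcal{C}^\top\bar{Q}\mathcal{C}-\bar{S})\bar{V} &= -\mathcal{C}^\top\bar{Q}\mathcal{A}\mu_0,
\end{align*}
which, stacked, is exactly the matrix equation whose inversion yields (\ref{eq:mean_sol}).

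The main obstacle, and the step where the assumption (\ref{eq:mean_assump}) is indispensable, is establishing invertibility of the block coefficient matrix. I would invoke a Schur complement argument with respect to the lower-right block: since $\mathcal{C}^\top\bar{Q}\mathcal{C}-\bar{S}\prec 0$, its inverse exists and $-(\mathcal{C}^\top\bar{Q}\mathcal{C}-\bar{S})^{-1}\succ 0$. Consequently, the Schur complement
\[
(\mathcal{B}^\top\bar{Q}\mathcal{B}+\bar{R})-\mathcal{B}^\top\bar{Q}\mathcal{C}(\mathcal{C}^\top\bar{Q}\mathcal{C}-\bar{S})^{-1}\mathcal{C}^\top\bar{Q}\mathcal{B}
\]
is the sum of a positive definite matrix and a positive semi-definite matrix, hence positive definite. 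The Schur determinant identity then implies that the full block matrix in (\ref{eq:mean_sol}) is nonsingular, which gives existence of a unique critical point $(\bar{U}^*,\bar{V}^*)$. Finally, uniqueness of the saddle point follows from the strict convex--concave structure: for any other candidate pair, strict convexity in $\bar{U}$ and strict concavity in $\bar{V}$ would allow one player to strictly improve, contradicting the saddle inequality $J_\mu(\bar{U}^*,\bar{V})\le J_\mu(\bar{U}^*,\bar{V}^*)\le J_\mu(\bar{U},\bar{V}^*)$.
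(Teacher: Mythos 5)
Your proposal is correct and follows essentially the same route as the paper: first-order stationarity conditions yielding the stacked linear system, invertibility of the block matrix via the Schur complement with respect to the $(\mathcal{C}^\top\bar{Q}\mathcal{C}-\bar{S})$ block, and uniqueness from the strict convexity in $\bar{U}$ and strict concavity in $\bar{V}$. Your observation that the Schur complement is positive definite as the sum of a positive definite and a positive semi-definite term is in fact slightly more careful than the paper's wording, which asserts the correction term is strictly negative definite.
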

	
	\vspace{0.1in}
	
	\begin{proof}
		The payoff function (\ref{eq:mean_cost}) can be expressed as
		\begin{align}
		J_\mu(\bar{U},\bar{V}) &= (\mathcal{A}\mu_0 + \mathcal{B}\bar{U} + \mathcal{C}\bar{V})^\top \bar{Q}(\mathcal{A}\mu_0 + \mathcal{B}\bar{U} + \mathcal{C}\bar{V}) + \bar{U}^\top \bar{R}\bar{U} - \bar{V}^\top \bar{S}\bar{V}.
		\end{align}
		The first-order necessary conditions \cite{Basar1999} for a saddle point yield
		\begin{subequations}
			\begin{align}
			\nabla_{\bar{U}}J_\mu &= (\mathcal{B}^\top\bar{Q}\mathcal{B} + \bar{R})\bar{U} + \mathcal{B}^\top\bar{Q}\mathcal{C}\bar{V} + \mathcal{B}^\top\bar{Q}\mathcal{A}\mu_0 = 0, \label{eq:mean_grad1}\\
			\nabla_{\bar{V}}J_\mu &= (\mathcal{C}^\top\bar{Q}\mathcal{C} - \bar{S})\bar{V} + \mathcal{C}^\top\bar{Q}\mathcal{B}\bar{U} + \mathcal{C}^\top\bar{Q}\mathcal{A}\mu_0 = 0. \label{eq:mean_grad2}
			\end{align}
		\end{subequations}
		The above two equations can be expressed as
		\begin{align}
		\left[\begin{array}{cc}
		\mathcal{B}^\top\bar{Q}\mathcal{B} + \bar{R} & \mathcal{B}^\top\bar{Q}\mathcal{C} \\
		\mathcal{C}^\top\bar{Q}\mathcal{B} & \mathcal{C}^\top\bar{Q}\mathcal{C} - \bar{S}
		\end{array}\right]\left[\begin{array}{c}
		\bar{U}^* \\ \bar{V}^* 
		\end{array}\right] = -\left[\begin{array}{c}
		\mathcal{B}^\top\bar{Q}\mathcal{A} \\ \mathcal{C}^\top\bar{Q}\mathcal{A}
		\end{array}\right] \mu_0, \label{eq:meansol_tfeq}
		\end{align}
		Let
		\begin{align}
		\mathcal{T}_m = \left[\begin{array}{cc}
		\mathcal{B}^\top\bar{Q}\mathcal{B} + \bar{R} & \mathcal{B}^\top\bar{Q}\mathcal{C} \\
		\mathcal{C}^\top\bar{Q}\mathcal{B} & \mathcal{C}^\top\bar{Q}\mathcal{C} - \bar{S}
		\end{array}\right],
		\end{align}
		and from (\ref{eq:mean_assump}), $\mathcal{B}^\top\bar{Q}\mathcal{C}(\mathcal{C}^\top\bar{Q}\mathcal{C} - \bar{S})^{-1}\mathcal{C}^\top\bar{Q}\mathcal{B} \prec 0$.
		As a result, $\mathcal{B}^\top\bar{Q}\mathcal{B} + \bar{R} - \mathcal{B}^\top\bar{Q}\mathcal{C}(\mathcal{C}^\top\bar{Q}\mathcal{C} - \bar{S})^{-1}\mathcal{C}^\top\bar{Q}\mathcal{B} \succ 0$.
		Therefore, $\text{det}(\mathcal{T}_m) = \text{det}(\mathcal{C}^\top\bar{Q}\mathcal{C} - \bar{S})\text{det}(\mathcal{B}^\top\bar{Q}\mathcal{B} + \bar{R} - \mathcal{B}^\top\bar{Q}\mathcal{C}(\mathcal{C}^\top\bar{Q}\mathcal{C} - \bar{S})^{-1}\mathcal{C}^\top\bar{Q}\mathcal{B}) \neq 0$, and $\mathcal{T}_m$ is invertible.
		Equation (\ref{eq:mean_sol}) then follows immediately from (\ref{eq:meansol_tfeq}).
		From (\ref{eq:mean_assump}), the second order derivatives yield
		\begin{subequations}
			\begin{align}
			\nabla_{\bar{U}\bar{U}}J_\mu &= \mathcal{B}^\top\bar{Q}\mathcal{B} + \bar{R} \succ 0, \\
			\nabla_{\bar{V}\bar{V}}J_\mu &= \mathcal{C}^\top\bar{Q}\mathcal{C} - \bar{S} \prec 0.
			\end{align}
			Therefore, the payoff function is convex in $\bar{U}$, and concave in $\bar{V}$. Hence $(\bar{U}^*,\bar{V}^*)$ is the only saddle point that solves the given dynamic game \cite{Basar1999}.
		\end{subequations}
	\end{proof}
	
	\vspace{0.1in}
	
	Next, we analyze the CMSG.
	As this is a constrained zero-sum game, we obtain the following inequality.
	A similar result can be found in Ref. \cite{Altman2009} (Theorem III.1).
	
	\vspace{0.1in}
	
	\begin{lemma}
		Assuming that the UMSG (\ref{eq:mean_game}) has a saddle point equilibrium (Proposition \ref{prop:umsg}), the CMSG (\ref{eq:const_mean_game}) satisfies
		\begin{align}
		\underset{\bar{U} \in \mathbb{R}^{Nm}}{\inf}~\underset{\bar{V} \in \mathbb{R}^{N\ell}}{\sup}~J_\mu(\bar{U},\bar{V}) \leq \underset{\bar{V} \in \mathbb{R}^{N\ell}}{\sup}~\underset{\bar{U} \in \bar{\mathcal{R}}}{\inf}~J_\mu(\bar{U},\bar{V}). \label{eq:infsup_supinf}
		\end{align} \label{lemma:mean_const}
	\end{lemma}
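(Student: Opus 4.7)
The plan is to exploit the UMSG saddle point $(\bar{U}^*,\bar{V}^*)$ furnished by Proposition \ref{prop:umsg} and use it as a pivot to relate the two sides of \eqref{eq:infsup_supinf}. Because $(\bar{U}^*,\bar{V}^*)$ is a saddle point of $J_\mu$ on $\mathbb{R}^{Nm}\times\mathbb{R}^{N\ell}$, we immediately have
\begin{align*}
\underset{\bar{U}\in\mathbb{R}^{Nm}}{\inf}~\underset{\bar{V}\in\mathbb{R}^{N\ell}}{\sup}~J_\mu(\bar{U},\bar{V}) \;=\; J_\mu(\bar{U}^*,\bar{V}^*),
\end{align*}
which disposes of the left-hand side of \eqref{eq:infsup_supinf} and reduces the lemma to showing $J_\mu(\bar{U}^*,\bar{V}^*) \leq \sup_{\bar{V}}\inf_{\bar{U}\in\bar{\mathcal{R}}}J_\mu(\bar{U},\bar{V})$.

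For the right-hand side, I would first fix the stopper's action at $\bar{V}^*$. The right saddle-point inequality $J_\mu(\bar{U}^*,\bar{V}^*)\leq J_\mu(\bar{U},\bar{V}^*)$ holds for every $\bar{U}\in\mathbb{R}^{Nm}$, and in particular for every $\bar{U}\in\bar{\mathcal{R}}\subseteq\mathbb{R}^{Nm}$. Taking the infimum over $\bar{\mathcal{R}}$ preserves the inequality and gives
\begin{align*}
J_\mu(\bar{U}^*,\bar{V}^*) \;\leq\; \underset{\bar{U}\in\bar{\mathcal{R}}}{\inf}~J_\mu(\bar{U},\bar{V}^*).
\end{align*}
Since replacing $\bar{V}^*$ by a supremum over all $\bar{V}\in\mathbb{R}^{N\ell}$ can only enlarge the right-hand side, chaining yields $J_\mu(\bar{U}^*,\bar{V}^*) \leq \sup_{\bar{V}}\inf_{\bar{U}\in\bar{\mathcal{R}}}J_\mu(\bar{U},\bar{V})$, which combined with the first displayed equation delivers \eqref{eq:infsup_supinf}.

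The only subtlety worth a sentence is what happens when $\bar{\mathcal{R}}$ is empty, so that $\bar{V}^*\notin\bar{\mathcal{R}}$'s preimage is vacuous; in that case, using the standard convention $\inf_\emptyset=+\infty$, the interior $\inf$ on the right evaluates to $+\infty$ and the inequality is trivial. Otherwise the argument above applies verbatim, and no appeal to convexity, continuity, or Slater-type qualifications of the constraint \eqref{eq:org_meancont} is needed. The main (mild) obstacle is simply to remember that the left-hand side is the \emph{unconstrained} upper value (so that Proposition \ref{prop:umsg} applies directly), while the right-hand side is the \emph{constrained} lower value, and that $\bar{\mathcal{R}}\subseteq\mathbb{R}^{Nm}$ is precisely what makes the one-line saddle-point pivot go through.
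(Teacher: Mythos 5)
Your proof is correct and follows essentially the same route as the paper: both arguments pivot on the existence of the unconstrained saddle point (so that the unconstrained $\inf\sup$ equals the game value) and on the inclusion $\bar{\mathcal{R}}\subseteq\mathbb{R}^{Nm}$, which makes the inner infimum only larger. The paper phrases the second step as $\inf_{\bar{U}\in\mathbb{R}^{Nm}}J_\mu(\bar{U},\bar{V})\leq\inf_{\bar{U}\in\bar{\mathcal{R}}}J_\mu(\bar{U},\bar{V})$ uniformly in $\bar{V}$ before taking the supremum, whereas you anchor at $\bar{V}^*$ via the saddle-point inequality; your extra remark about the $\inf_\emptyset=+\infty$ convention is a sensible addition that the paper omits.
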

	
	\vspace{0.1in}
	
	\begin{proof} Given that the UMSG has a saddle point equilibrium, it follows that
		\begin{align}
		\underset{\bar{U} \in \mathbb{R}^{Nm}}{\inf}~\underset{\bar{V} \in \mathbb{R}^{N\ell}}{\sup}~J_\mu(\bar{U},\bar{V}) = \underset{\bar{V} \in \mathbb{R}^{N\ell}}{\sup}~\underset{\bar{U} \in \mathbb{R}^{Nm}}{\inf}~J_\mu(\bar{U},\bar{V}). \label{eq:infsupeq}
		\end{align}
		Since $\bar{\mathcal{R}} \subseteq \mathbb{R}^{Nm}$,
		\begin{align}
		\underset{\bar{U} \in \mathbb{R}^{Nm}}{\inf}~J_\mu(\bar{U},\bar{V}) \leq \underset{\bar{U} \in \bar{\mathcal{R}}}{\inf}~J_\mu(\bar{U},\bar{V}).
		\end{align}
		Hence,
		\begin{align}
		\underset{\bar{V} \in \mathbb{R}^{N\ell}}{\sup}~\underset{\bar{U} \in \mathbb{R}^{Nm}}{\inf}~J_\mu(\bar{U},\bar{V}) \leq \underset{\bar{V} \in \mathbb{R}^{N\ell}}{\sup}~\underset{\bar{U} \in \bar{\mathcal{R}}}{\inf}~J_\mu(\bar{U},\bar{V}),
		\end{align}
		and from (\ref{eq:infsupeq}), the result follows.
	\end{proof}
	
	\vspace{0.1in}
	
	As a result, a pure-strategy equilibrium might not exist for the CMSG, and only players' best responses can be obtained \cite{Altman2009}.
	To this end, the constrained upper and lower games for the CMSG problem can be examined. 
	As stated in Definition \ref{def:constr_values}, in the constrained lower game, the stopper has to choose its input first, while the controller has the advantage of obtaining the stopper input, and then choosing his best response accordingly. 
	
	\vspace{0.1in}
	
	\begin{lemma}
		Assuming that the discrete-time linear dynamical system (\ref{eq:ini_system}) is controllable for $C_k = 0$ and $D_k = 0$ (i.e., rank$[\bar{B}_N] = n$), the controller's feasible set (the set of controllers for which the constraint (\ref{eq:org_meancont}) is met given the stopper input) is non-empty for any $\bar{V} \in \mathbb{R}^{N\ell}$.
	\end{lemma}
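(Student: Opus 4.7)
The plan is to recognize that the constraint (\ref{eq:org_meancont}) is a linear equation in $\bar{U}$ with a right-hand side determined by the fixed stopper action $\bar{V}$, and to use the full-row-rank hypothesis on $\bar{B}_N$ to guarantee solvability for every such right-hand side.

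First, I would rewrite the terminal mean constraint (\ref{eq:org_meancont}) in the form
\begin{align*}
\bar{B}_N \bar{U} \;=\; \mu_N - \bar{A}_N \mu_0 - \bar{C}_N \bar{V},
\end{align*}
so that for a prescribed $\bar{V} \in \mathbb{R}^{N\ell}$, the set $\bar{\mathcal{U}}(\bar{V})$ is exactly the solution set of a linear system in $\bar{U} \in \mathbb{R}^{Nm}$ whose right-hand side lies in $\mathbb{R}^n$. Thus $\bar{\mathcal{U}}(\bar{V})$ is non-empty if and only if the right-hand side lies in the column range of $\bar{B}_N$.

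Second, I would invoke the hypothesis $\mathrm{rank}[\bar{B}_N] = n$. Since $\bar{B}_N \in \mathbb{R}^{n\times Nm}$, full rank here means full row rank, so its range equals all of $\mathbb{R}^n$, and consequently $\bar{B}_N \bar{B}_N^\top \in \mathbb{R}^{n\times n}$ is invertible. Irrespective of the value of the right-hand side (hence irrespective of $\bar{V}$), a particular solution is provided by the minimum-norm choice
\begin{align*}
\bar{U}^{\mathrm{mn}}(\bar{V}) \;=\; \bar{B}_N^\top (\bar{B}_N \bar{B}_N^\top)^{-1}\bigl(\mu_N - \bar{A}_N \mu_0 - \bar{C}_N \bar{V}\bigr),
\end{align*}
which belongs to $\bar{\mathcal{U}}(\bar{V})$, proving that the latter is non-empty for every $\bar{V} \in \mathbb{R}^{N\ell}$. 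One should also note that, since typically $Nm > n$, the set $\bar{\mathcal{U}}(\bar{V})$ is in fact an affine subspace of positive dimension, with $\bar{U}^{\mathrm{mn}}(\bar{V}) + \ker(\bar{B}_N)$ giving the full family of feasible controllers.

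There is no real obstacle; the only item deserving a brief remark is the identification of ``controllability for $C_k = 0$, $D_k = 0$'' with the row-rank condition $\mathrm{rank}[\bar{B}_N] = n$. This follows directly from the alternative representation (\ref{eq:inter_sys}): with $C_k = 0$ and $D_k = 0$, the reachable set from $x_0 = 0$ at time $N$ equals the range of $\bar{B}_N$, so controllability in the classical sense is equivalent to this range being $\mathbb{R}^n$, which is exactly the full-row-rank condition used above.
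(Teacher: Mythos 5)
Your proof is correct and follows essentially the same route as the paper: both rewrite the terminal mean constraint as the linear system $\bar{B}_N\bar{U} = \mu_N - \bar{A}_N\mu_0 - \bar{C}_N\bar{V}$ and conclude solvability for every $\bar{V}$ from the full-row-rank hypothesis on $\bar{B}_N$. Your explicit minimum-norm solution and the remark on identifying controllability with $\mathrm{rank}[\bar{B}_N]=n$ are harmless additions beyond what the paper writes, but the core argument is identical.
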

	
	\vspace{0.1in}
	
	\begin{proof} For a given $\bar{V} \in \mathbb{R}^{N\ell}$, the mean constraint (\ref{eq:org_meancont}) can be rewritten as 
		\begin{align}
		\bar{B}_N\bar{U} = \mu_N - \bar{A}_N\mu_0 - \bar{C}_N \bar{V}.
		\end{align}	
		Since $\mu_0$ and $\mu_N$ are known, and since rank$[\bar{B}_N] = n$, there exists a solution for $\bar{U}$ for every $\bar{V} \in \mathbb{R}^{N\ell}$. 
		Hence, the controller's feasible set is non-empty.
	\end{proof}
	
	\vspace{0.1in}
	
	From the above lemma, it is obvious that the controller can meet the constraint (\ref{eq:org_meancont}), if the condition rank$[\bar{B}_N] = n$ is satisfied.
	In the upper game, the controller input is obtained first and the stopper best responds accordingly. 
	The terminal condition (\ref{eq:org_meancont}) depends on the stopper input. 
	Note that it is assumed that the stopper is indifferent to this constraint, and in this regard, the sufficient condition for which the controller's terminal constraint is met is derived in Lemma \ref{lemma:cmsg_cond} below.
	
	From equations (\ref{eq:mean_grad1}), (\ref{eq:mean_grad2}), the players' best responses as a function of their opponent response can be obtained as
	\begin{subequations}
		\begin{align}
		\bar{U} &= -(\mathcal{B}^\top\bar{Q}\mathcal{B} + \bar{R})^{-1} (\mathcal{B}^\top\bar{Q}\mathcal{C}\bar{V} + \mathcal{B}^\top\bar{Q}\mathcal{A}\mu_0), \\
		\bar{V} &= -(\mathcal{C}^\top\bar{Q}\mathcal{C} - \bar{S})^{-1} (\mathcal{C}^\top\bar{Q}\mathcal{B}\bar{U} + \mathcal{C}^\top\bar{Q}\mathcal{A}\mu_0). \label{eq:vbest}
		\end{align}
	\end{subequations} 
	In the upper game, where the controller plays first, the stopper input as a function of $\bar{U}$ is given by (\ref{eq:vbest}).
	Given the stopper input (as per (\ref{eq:vbest})), from the constraint (\ref{eq:org_meancont}), it follows that
	\begin{align}
	\mu_N &= \bar{A}_N\mu_0 + \bar{B}_N \bar{U}  + \bar{C}_N \big(-(\mathcal{C}^\top\bar{Q}\mathcal{C} - \bar{S})^{-1} (\mathcal{C}^\top\bar{Q}\mathcal{B}\bar{U} + \mathcal{C}^\top\bar{Q}\mathcal{A}\mu_0)\big) \nonumber \\
	&= \big(\bar{A}_N - \bar{C}_N(\mathcal{C}^\top\bar{Q}\mathcal{C} - \bar{S})^{-1}\mathcal{C}^\top\bar{Q}\mathcal{A}\big) \mu_0  + \big(\bar{B}_N -\bar{C}_N(\mathcal{C}^\top\bar{Q}\mathcal{C} - \bar{S})^{-1}\mathcal{C}^\top\bar{Q}\mathcal{B} \big) \bar{U}. \label{eq:pur_DS}
	\end{align}
	
	For the sake of brevity, let $\mathcal{G} = \bar{B}_N - \bar{C}_N(\mathcal{C}^\top\bar{Q}\mathcal{C} - \bar{S})^{-1}\mathcal{C}^\top\bar{Q}\mathcal{B}$.
	
	\vspace{0.1in}
	
	\begin{lemma} \label{lemma:cmsg_cond}
		Given the CMSG (\ref{eq:const_mean_game}), in the associated upper game, the constraint (\ref{eq:org_meancont}) is satisfied if and only if
		\begin{align}
		&\text{rank}\left[\mathcal{G} \quad \mu_N - \left(\bar{A}_N - \bar{C}_N (\mathcal{C}^\top\bar{Q}\mathcal{C} - \bar{S})^{-1}\mathcal{C}^\top\bar{Q}\mathcal{A}\right) \mu_0\right] = \text{rank}\left[\mathcal{G}\right]. \label{eq:rank_cond}
		\end{align}
	\end{lemma}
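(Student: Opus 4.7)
The plan is to reduce the problem to checking consistency of a linear system in $\bar{U}$, and then invoke the Rouch\'e--Capelli (Kronecker--Capelli) theorem. Equation (\ref{eq:pur_DS}) already incorporates the stopper's best response (\ref{eq:vbest}) into the terminal-mean constraint (\ref{eq:org_meancont}); the assumption (\ref{eq:mean_assump}) of Proposition \ref{prop:umsg} guarantees that the inverse $(\mathcal{C}^\top\bar{Q}\mathcal{C}-\bar{S})^{-1}$ appearing there is well defined, so (\ref{eq:pur_DS}) genuinely describes the terminal mean produced by any controller choice $\bar{U}$ in the upper game.

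First, I would move the known terms in (\ref{eq:pur_DS}) to the left-hand side and collect the coefficient of $\bar{U}$ using the definition of $\mathcal{G}$, obtaining the linear system
\[
\mathcal{G}\,\bar{U} \;=\; \mu_N - \bigl(\bar{A}_N - \bar{C}_N(\mathcal{C}^\top\bar{Q}\mathcal{C}-\bar{S})^{-1}\mathcal{C}^\top\bar{Q}\mathcal{A}\bigr)\mu_0.
\]
The constraint (\ref{eq:org_meancont}) is satisfied in the upper game if and only if this equation admits a solution $\bar{U} \in \mathbb{R}^{Nm}$, since the upper-game structure fixes the stopper's action to (\ref{eq:vbest}), which has already been substituted into (\ref{eq:org_meancont}).

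Next, I would invoke the classical consistency criterion: a linear system $Mx=b$ admits a solution if and only if $b$ lies in the column space of $M$, equivalently $\text{rank}[M] = \text{rank}[M \mid b]$. Taking $M = \mathcal{G}$ and $b$ equal to the right-hand side displayed above yields the rank condition (\ref{eq:rank_cond}) verbatim, closing both directions of the equivalence.

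No step here is conceptually difficult; the proof is a one-line appeal to Rouch\'e--Capelli once the algebraic reduction has been performed. The main point worth emphasizing is that the upper-game hierarchy is what collapses the stopper's action to the explicit formula (\ref{eq:vbest}), which in turn converts the terminal-mean constraint into a purely linear condition on $\bar{U}$ with fixed data $\mu_0$ and $\mu_N$; without this reduction the condition would not assume the clean rank form of (\ref{eq:rank_cond}).
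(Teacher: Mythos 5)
Your proposal is correct and follows essentially the same route as the paper: the paper likewise substitutes the stopper's best response (\ref{eq:vbest}) into the terminal constraint to obtain the linear system $\mathcal{G}\bar{U} = \mu_N - \bigl(\bar{A}_N - \bar{C}_N(\mathcal{C}^\top\bar{Q}\mathcal{C}-\bar{S})^{-1}\mathcal{C}^\top\bar{Q}\mathcal{A}\bigr)\mu_0$ and identifies (\ref{eq:rank_cond}) with its solvability. Your explicit appeal to the Rouch\'e--Capelli criterion and your remarks on why the upper-game structure fixes $\bar{V}$ merely make the paper's (rather terse) argument more precise.
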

	
	\vspace{0.1in}
	
	\begin{proof}
		The condition (\ref{eq:rank_cond}) suggests that the system of linear equations, obtained from (\ref{eq:pur_DS}),
		\begin{align}
		\mathcal{G} \bar{U} = \mu_N - \left(\bar{A}_N - \bar{C}_N (\mathcal{C}^\top\bar{Q}\mathcal{C} - \bar{S})^{-1}\mathcal{C}^\top\bar{Q}\mathcal{A}\right) \mu_0,
		\end{align}
		has a solution for $\bar{U}$. Therefore, there always exists a constrained upper value for the CMSG, and the controller can drive the state to a given $\mu_N$ at the $N^{th}$ time-step.
	\end{proof}
	
	\vspace{0.1in}
	
	Note that the matrix $\mathcal{G}$ can be treated as a \emph{relative controllability matrix}, similar to the one introduced in Ref. \cite{Ho1968} for continuous systems.
	The optimal control sequences $\bar{U}_*$ and $\bar{V_*}$ that solve the upper game can be found as follows. 
	From (\ref{eq:vbest}), the upper game can be expressed in terms of the following minimization problem.
	\begin{align}
	\begin{cases}
	\underset{\bar{U} \in \mathbb{R}^{Nm}}{\min} \bar{X}^\top \bar{Q}\bar{X} + \bar{U}^\top \bar{R}\bar{U} - \bar{V}^\top \bar{S}\bar{V},\\
	\text{subject to } \mu_N = \bar{A}_N\mu_0 + \bar{B}_N \bar{U} + \bar{C}_N \bar{V}, \label{eq:mean_min}
	\end{cases}
	\end{align}
	where $\bar{X} = \mathcal{A}\mu_0 + \mathcal{B}\bar{U} + \mathcal{C}\bar{V}$, and $\bar{V} = -(\mathcal{C}^\top\bar{Q}\mathcal{C} - \bar{S})^{-1} (\mathcal{C}^\top\bar{Q}\mathcal{B}\bar{U} + \mathcal{C}^\top\bar{Q}\mathcal{A}\mu_0)$.
	
	\vspace{0.1in}
	
	\begin{proposition}
		Under the assumption
		\begin{align}
		\text{rank}~\mathcal{G} = n, \label{eq:rel_contr_cond}
		\end{align}
		the optimal control sequence $\bar{U}_*$ that solves the minimization problem in (\ref{eq:mean_min}) is given by 
		\begin{align}
		\bar{U}_* = \mathcal{R}^{-1}\left(\mathcal{M} + \mathcal{G}^\top\lambda/2\right), \label{eq:ubest_minmax}
		\end{align}
		where
		\begin{subequations}
			\begin{align}
			\mathcal{R} &= \bar{R} + \mathcal{B}^\top\bar{Q}\mathcal{B} - \mathcal{B}^\top\bar{Q}\mathcal{C}(\mathcal{C}^\top\bar{Q}\mathcal{C} - \bar{S})^{-1}\mathcal{C}^\top\bar{Q}\mathcal{B}, \label{eq:scriptR} \\
			\mathcal{M} &= \big(\mathcal{B}^\top\bar{Q}\mathcal{C}(\mathcal{C}^\top\bar{Q}\mathcal{C} - \bar{S})^{-1}\mathcal{C}^\top - \mathcal{B}^\top\big)\bar{Q}\mathcal{A}\mu_0, \label{eq:scriptM}\\
			\lambda &= 2\big(\mathcal{G}\mathcal{R}^{-1}\mathcal{G}^\top\big)^{-1} \big(\mu_N - \bar{A}_N\mu_0 + \bar{C}_N(\mathcal{C}^\top\bar{Q}\mathcal{C} - \bar{S})^{-1}\mathcal{C}^\top\bar{Q}\mathcal{A}\mu_0 - \mathcal{G}\mathcal{R}^{-1}\mathcal{M}\big).
			\end{align}
		\end{subequations}
	\end{proposition}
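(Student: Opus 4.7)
The plan is to recognize the upper-game problem (\ref{eq:mean_min}) as a linearly-constrained quadratic program in $\bar{U}$ alone, after the stopper's best response $\bar{V}(\bar{U})$ has been substituted in, and then to solve it with a single Lagrange multiplier vector $\lambda$ attached to the terminal mean constraint.

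First I would eliminate $\bar{V}$ by substituting its best response (\ref{eq:vbest}) into both the payoff and the constraint. Substituting into $\bar{X} = \mathcal{A}\mu_0+\mathcal{B}\bar{U}+\mathcal{C}\bar{V}$ and expanding $\bar{X}^\top\bar{Q}\bar{X} + \bar{U}^\top\bar{R}\bar{U} - \bar{V}^\top\bar{S}\bar{V}$ reduces the payoff, after straightforward cancellations using $\bar{V} = -(\mathcal{C}^\top\bar{Q}\mathcal{C}-\bar{S})^{-1}(\mathcal{C}^\top\bar{Q}\mathcal{B}\bar{U}+\mathcal{C}^\top\bar{Q}\mathcal{A}\mu_0)$, to a quadratic in $\bar{U}$ of the form $\bar{U}^\top\mathcal{R}\bar{U} - 2\mathcal{M}^\top\bar{U} + \text{const}(\mu_0)$, with exactly the $\mathcal{R}$ and $\mathcal{M}$ given in (\ref{eq:scriptR})--(\ref{eq:scriptM}). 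Invertibility of $\mathcal{R}$ follows from assumption (\ref{eq:mean_assump}): since $\bar{S}-\mathcal{C}^\top\bar{Q}\mathcal{C}\succ 0$, the Schur-complement term $-\mathcal{B}^\top\bar{Q}\mathcal{C}(\mathcal{C}^\top\bar{Q}\mathcal{C}-\bar{S})^{-1}\mathcal{C}^\top\bar{Q}\mathcal{B}\succ 0$, which when added to $\bar{R}+\mathcal{B}^\top\bar{Q}\mathcal{B}\succeq\bar{R}\succ 0$ yields $\mathcal{R}\succ 0$.

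Next I would substitute the same $\bar{V}(\bar{U})$ into the terminal constraint in (\ref{eq:mean_min}); this is exactly the computation already done in (\ref{eq:pur_DS}) and delivers the affine constraint $\mathcal{G}\bar{U} = \mu_N - \bar{A}_N\mu_0 + \bar{C}_N(\mathcal{C}^\top\bar{Q}\mathcal{C}-\bar{S})^{-1}\mathcal{C}^\top\bar{Q}\mathcal{A}\mu_0$. Forming the Lagrangian
\begin{equation*}
\mathcal{L}(\bar{U},\lambda) = \bar{U}^\top\mathcal{R}\bar{U} - 2\mathcal{M}^\top\bar{U} - \lambda^\top\bigl(\mathcal{G}\bar{U} - \mu_N + \bar{A}_N\mu_0 - \bar{C}_N(\mathcal{C}^\top\bar{Q}\mathcal{C}-\bar{S})^{-1}\mathcal{C}^\top\bar{Q}\mathcal{A}\mu_0\bigr),
\end{equation*}
the stationarity condition $\nabla_{\bar{U}}\mathcal{L}=0$ gives $2\mathcal{R}\bar{U} - 2\mathcal{M} - \mathcal{G}^\top\lambda = 0$, hence $\bar{U}_* = \mathcal{R}^{-1}(\mathcal{M} + \mathcal{G}^\top\lambda/2)$, which is (\ref{eq:ubest_minmax}). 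Enforcing the constraint $\mathcal{G}\bar{U}_*$ equal to the right-hand side then yields $\mathcal{G}\mathcal{R}^{-1}\mathcal{G}^\top\lambda/2 = \mu_N - \bar{A}_N\mu_0 + \bar{C}_N(\mathcal{C}^\top\bar{Q}\mathcal{C}-\bar{S})^{-1}\mathcal{C}^\top\bar{Q}\mathcal{A}\mu_0 - \mathcal{G}\mathcal{R}^{-1}\mathcal{M}$, which inverts to the claimed formula for $\lambda$ once we know $\mathcal{G}\mathcal{R}^{-1}\mathcal{G}^\top$ is nonsingular.

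The only genuinely delicate step is the invertibility of $\mathcal{G}\mathcal{R}^{-1}\mathcal{G}^\top$, and this is where the hypothesis (\ref{eq:rel_contr_cond}) enters: since $\mathcal{R}^{-1}\succ 0$, the $n\times n$ Gram-type matrix $\mathcal{G}\mathcal{R}^{-1}\mathcal{G}^\top$ is positive definite if and only if $\mathcal{G}$ has full row rank $n$, which is precisely the relative-controllability assumption. Finally, because the reduced objective is strictly convex in $\bar{U}$ (as $\mathcal{R}\succ 0$) and the constraint set is affine, the stationary point produced by the Lagrangian conditions is the unique global minimizer of (\ref{eq:mean_min}), completing the argument. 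The main obstacle, beyond bookkeeping, is checking that the cross terms in the substituted payoff collapse cleanly into $\mathcal{M}$ as given, which is a routine but lengthy expansion relying on the symmetry of $\bar{Q}$ and on $(\mathcal{C}^\top\bar{Q}\mathcal{C}-\bar{S})$ being symmetric and invertible.
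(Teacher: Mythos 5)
Your proposal is correct and follows essentially the same route as the paper: substitute the stopper's best response $\bar{V}(\bar{U})$, solve the resulting linearly constrained quadratic program in $\bar{U}$ via a Lagrange multiplier, verify $\mathcal{R}\succ 0$ from (\ref{eq:mean_assump}) for strict convexity, and use the full row rank of $\mathcal{G}$ to invert $\mathcal{G}\mathcal{R}^{-1}\mathcal{G}^\top$. The only cosmetic difference is that you pre-reduce the payoff to the explicit quadratic $\bar{U}^\top\mathcal{R}\bar{U}-2\mathcal{M}^\top\bar{U}$ before differentiating, whereas the paper differentiates the unreduced Lagrangian using the chain rule through $\partial\bar{V}/\partial\bar{U}$; the computations are identical.
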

	
	
	\begin{proof}The Lagrangian for the constrained minimization problem (\ref{eq:mean_min}) can be written as
		\begin{align}
		\mathcal{L}(\bar{U},\lambda) &= \bar{X}^\top \bar{Q}\bar{X} + \bar{U}^\top \bar{R}\bar{U} - \bar{V}^\top \bar{S}\bar{V} + \lambda^\top(\mu_N - \bar{A}_N\mu_0 - \bar{B}_N \bar{U} - \bar{C}_N \bar{V}) \nonumber \\
		&= (\mathcal{A}\mu_0 + \mathcal{B}\bar{U} + \mathcal{C}\bar{V})^\top \bar{Q}(\mathcal{A}\mu_0 + \mathcal{B}\bar{U} + \mathcal{C}\bar{V}) + \bar{U}^\top \bar{R}\bar{U} - \bar{V}^\top \bar{S}\bar{V} + \lambda^\top(\mu_N - \bar{A}_N\mu_0 - \bar{B}_N \bar{U} - \bar{C}_N \bar{V}),
		\end{align}	
		where $\lambda \in \mathbb{R}^{n}$. The first-order optimality condition yields
		\begin{align}
		\nabla_{\bar{U}} \mathcal{L} &= 2(\mathcal{A}\mu_0 + \mathcal{B}\bar{U} + \mathcal{C}\bar{V})^\top\bar{Q}\left(\mathcal{B} + \mathcal{C}\frac{\partial \bar{V}}{\partial \bar{U}}\right) + 2\bar{U}^\top \bar{R}  - 2\bar{V}^\top\bar{S}\frac{\partial \bar{V}}{\partial \bar{U}} + \lambda^\top\left( - \bar{B}_N - \bar{C}_N \frac{\partial \bar{V}}{\partial \bar{U}}\right) = 0, 
		\end{align} 
		and (\ref{eq:ubest_minmax}) follows from the fact that $\dfrac{\partial \bar{V}}{\partial \bar{U}} = - (\mathcal{C}^\top\bar{Q}\mathcal{C} - \bar{S})^{-1}\mathcal{C}^\top\bar{Q}\mathcal{B}$ (obtained using (\ref{eq:vbest})), and from the second-order optimality condition
		\begin{align}
		\frac{\nabla_{\bar{U}\bar{U}} \mathcal{L}}{2} &= \left(\mathcal{B} + \mathcal{C}\frac{\partial \bar{V}}{\partial \bar{U}}\right)^\top\bar{Q}\left(\mathcal{B} + \mathcal{C}\frac{\partial \bar{V}}{\partial \bar{U}}\right) + \bar{R} -\frac{\partial \bar{V}}{\partial \bar{U}}^\top\bar{S}\frac{\partial \bar{V}}{\partial \bar{U}} \nonumber \\
		&= \left(\bar{R} + \mathcal{B}^\top\bar{Q}\mathcal{B} - \mathcal{B}^\top\bar{Q}\mathcal{C}(\mathcal{C}^\top\bar{Q}\mathcal{C} - \bar{S})^{-1}\mathcal{C}^\top\bar{Q}\mathcal{B}\right) \nonumber\\
		&= \mathcal{R} \succ 0
		\end{align}
		The Lagrange multiplier $\lambda$ can be found by substituting (\ref{eq:ubest_minmax}) along with (\ref{eq:scriptR}) and (\ref{eq:scriptM}) into the terminal constraint, obtaining 
		\begin{align}
		&\big(\mathcal{G}\mathcal{R}^{-1}\mathcal{G}^\top\big) \lambda = 2\big(\mu_N - \bar{A}_N\mu_0 +  \bar{C}_N(\mathcal{C}^\top\bar{Q}\mathcal{C} - \bar{S})^{-1}\mathcal{C}^\top\bar{Q}\mathcal{A}\mu_0 - \mathcal{G}\mathcal{R}^{-1}\mathcal{M}\big)
		\end{align}
		Note that since $\mathcal{R}$ is invertible and $\mathcal{G}$ has full row rank, $\mathcal{G}\mathcal{R}^{-1}\mathcal{G}^\top$ is invertible.
	\end{proof}


\section{Covariance Steering Game}
	\label{sec:covariance}
	
	The methodology to solve the UCSG and the CCSG is presented in this section. 
	Assuming a linear feedback control structure for steering the covariance, we express $\tilde{U}$ and $\tilde{V}$ as 
	\begin{align}
	\tilde{u}_k = K_ky_k, \quad \tilde{v}_k = L_ky_k, \label{eq:UV_tilde_y}
	\end{align}
	where $K_k \in \mathbb{R}^{m \times n}$, $L_k \in \mathbb{R}^{\ell \times n}$,
	\begin{subequations}
		\begin{align}
		y_{k+1} &= A_ky_k + D_kw_k,\\
		y_0 &= x_0 - \mu_0,
		\end{align}
	\end{subequations}
	and $y_k \in \mathbb{R}^{n}$. 
	Note that $\mathbb{E}[y_0] = 0$ and $\mathbb{E}[y_0y_0^\top] = \Sigma_0$.
	Further, it can be obtained that 
	\begin{align}
	Y = \mathcal{A}y_0 + \mathcal{D}W, \label{eq:Y_dyn}
	\end{align}
	where $Y = [y_0^\top,\dots,y_N^\top]^\top \in \mathbb{R}^{(N+1)n}$, using the matrices introduced in Section \ref{sec:prelims}.
	Therefore, $\tilde{X}$ in (\ref{eq:Xtilde}) can be rewritten as
	\begin{align}
	\tilde{X} = (I + \mathcal{B}K + \mathcal{C}L)(\mathcal{A}y_0 + \mathcal{D}W). \label{eq:new_Xtilde}
	\end{align}
	where,
	\begin{subequations} \label{eq:gainsKL}
		\begin{align}
		K = \left[\begin{array}{ccccc}
		K_0 & 0 & \dots & 0 & 0\\
		0 & K_1 & \dots & 0 & 0\\
		\vdots & \vdots & \ddots & \vdots & \vdots\\
		0 & 0 & \dots & K_{N-1} & 0
		\end{array}\right], \label{K_structure}\\
		L = \left[\begin{array}{ccccc}
		L_0 & 0 & \dots & 0 & 0\\
		0 & L_1 & \dots & 0 & 0\\
		\vdots & \vdots & \ddots & \vdots & \vdots\\
		0 & 0 & \dots & L_{N-1} & 0
		\end{array}\right], \label{L_structure}
		\end{align}
	\end{subequations}
	are the controller and the stopper gain matrices, respectively.
	Here $K \in \mathbb{K}$ and $L \in \mathbb{L}$, where $\mathbb{K}$ is the set of $Nm\times (N+1)n$ matrices that have the structure shown in (\ref{K_structure}), and similarly, $\mathbb{L}$ is the set of $N\ell \times (N+1)n$ matrices that have the structure shown in (\ref{L_structure}).
	From (\ref{eq:UV_tilde_y}), (\ref{eq:Y_dyn}), and (\ref{eq:new_Xtilde}), we have $\mathbb{E}[\tilde{X}\tilde{X}^\top] = (I + \mathcal{B}K + \mathcal{C}L)\Sigma_s(I + \mathcal{B}K + \mathcal{C}L)^\top$, $\mathbb{E}[\tilde{U}\tilde{U}^\top] = K\Sigma_sK^\top$, $\mathbb{E}[\tilde{V}\tilde{V}^\top] = L\Sigma_sL^\top$, where $\Sigma_s = \mathcal{A}\Sigma_0\mathcal{A}^\top + \mathcal{D}\mathcal{D}^\top$. 
	Therefore, the cost function (\ref{eq:cov_cost}) can be converted to the following quadratic form in terms of $K$ and $L$:
	\begin{align}
	J_\Sigma(K,L) &= \text{tr}(((I + \mathcal{B}K + \mathcal{C}L)^\top\bar{Q}(I + \mathcal{B}K + \mathcal{C}L) + K^\top\bar{R}K - L^\top\bar{S}L)\Sigma_s), \label{eq:new_cov_cost}
	\end{align}
	and the terminal constraint (\ref{eq:org_covcont}) can be rewritten as
	\begin{align}
	\Sigma_N = E_N (I + \mathcal{B}K + \mathcal{C}L)\Sigma_s(I + \mathcal{B}K + \mathcal{C}L)^\top E_N^\top. \label{eq:new_cov_const}
	\end{align}
	
	For the sake of analysis, we introduce the set $\tilde{\mathcal{R}} $.
Given stopper gain $L$ in CCSG, let $\mathcal{K}(L)$ denotes the set of gains $K \in \mathbb{K}$ for which the controller satisfies the constraint in (\ref{eq:new_cov_const}), and let $\tilde{\mathcal{R}} \triangleq \bigcup_{L \in \mathbb{L}}~\mathcal{K}(L) \subseteq \mathbb{K}$.
	
	We first analyze the UCSG.
	Since the gain matrices $K$ and $L$ have constraints on their structure with zeros, as shown in (\ref{eq:gainsKL}), with a slight abuse of notation, the Lagrangian can be written as
	\begin{align}
	\mathcal{L}(K,L,\Theta,\Xi) &=  \text{tr}(((I + \mathcal{B}K + \mathcal{C}L)^\top\bar{Q}(I + \mathcal{B}K + \mathcal{C}L) + K^\top\bar{R}K - L^\top\bar{S}L)\Sigma_s)/2  \nonumber \\
	&~~~+ \sum_{i = 1}^{Nm}\sum_{j \in \mathscr{J}_k(i)} \theta_{ij}e_i^\top K e_j + \sum_{i = 1}^{N\ell}\sum_{j \in \mathscr{J}_l(i)} \xi_{ij}e_i^\top L e_j, \label{eq:cov_lagra}
	\end{align}
	where the functions $\mathscr{J}_k(.)$ and $\mathscr{J}_l(.)$ map each row number to the set of columns in which the gains $K$ and $L$, respectively, have zero  elements. The matrices $\Theta \in \mathbb{R}^{Nm \times (N+1)n}$ and $\Xi \in \mathbb{R}^{N\ell \times (N+1)n}$ are Lagrange multipliers of sizes equal to $K$ and $L$, respectively. 
	Note that the blocks in $\Theta$ and $\Xi$ (corresponding to $K_k$ and $L_k$) are zeros, and $\theta_{ij}$ and $\xi_{ij}$ are the non-zero elements of these matrices.
	The first-order necessary conditions for the existence of a saddle point can be obtained by taking derivatives of the Lagrangian in (\ref{eq:cov_lagra}) with respect to $K$ and $L$ as
	\begin{subequations} \label{eq:cov_grads}
		\begin{align}
		\nabla_{K} \mathcal{L} &= \left[\mathcal{B}^\top\bar{Q} + \bar{R}K + \mathcal{B}^\top\bar{Q}\mathcal{B}K + \mathcal{B}^\top\bar{Q}\mathcal{C}L\right] \Sigma_s + \Theta = 0, \label{eq:cov_grad1}\\
		\nabla_{L} \mathcal{L} &= \left[\mathcal{C}^\top \bar{Q} - \bar{S}L + \mathcal{C}^\top\bar{Q}\mathcal{B}K +\mathcal{C}^\top\bar{Q}\mathcal{C}L\right]\Sigma_s + \Xi = 0. \label{eq:cov_grad2}
		\end{align}
	\end{subequations}
	The candidate solutions for the UCSG can be obtained by solving the linear system of equations given in (\ref{eq:cov_grads}).
	Since the gradients are linear, the second-order sufficient conditions, using the bordered Hessians, can be invoked to find the saddle points among the candidate solutions numerically \cite{magnus_matrix1999}. 
	
	Next, we analyze the CCSG. A result similar to the one proposed for the CMSG (Lemma \ref{lemma:mean_const}) follows for the CCSG and is given below. The proof is omitted as it is similar to the one given for Lemma \ref{lemma:mean_const}.
	
	\vspace{0.1in}
	
	\begin{lemma}
		Assuming that the UCSG with payoff function (\ref{eq:new_cov_cost}) has a saddle point equilibrium, then the CCSG (\ref{eq:org_covcont}), with the terminal constraint (\ref{eq:new_cov_const}) imposed only for the controller, satisfies
		\begin{align}
		\underset{K \in \mathbb{K}}{\inf}~\underset{L \in \mathbb{L}}{\sup}~J_\Sigma(K,L) \leq\underset{L \in \mathbb{L}}{\sup}~\underset{K \in \tilde{\mathcal{R}}}{\inf}~J_\Sigma(K,L).
		\end{align}
	\end{lemma}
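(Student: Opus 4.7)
The plan is to mimic the proof of Lemma \ref{lemma:mean_const} essentially verbatim, since the authors explicitly say the argument is the same. The core ingredients are just two: (i) the saddle-point hypothesis for the UCSG, which lets us swap the outer inf and sup when both are taken over the unrestricted sets $\mathbb{K}$ and $\mathbb{L}$; and (ii) monotonicity of the infimum under restriction of its domain, applied to the inclusion $\tilde{\mathcal{R}} \subseteq \mathbb{K}$ (which holds by construction, since $\tilde{\mathcal{R}}$ is defined as a union of subsets $\mathcal{K}(L)\subseteq \mathbb{K}$).

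Concretely, the first step is to use the saddle-point assumption to write
\[
\inf_{K \in \mathbb{K}} \sup_{L \in \mathbb{L}} J_\Sigma(K,L) \;=\; \sup_{L \in \mathbb{L}} \inf_{K \in \mathbb{K}} J_\Sigma(K,L).
\]
The second step is to observe that, for each fixed $L \in \mathbb{L}$, enlarging the feasible controller set from $\tilde{\mathcal{R}}$ to $\mathbb{K}$ cannot increase the infimum, so
\[
\inf_{K \in \mathbb{K}} J_\Sigma(K,L) \;\leq\; \inf_{K \in \tilde{\mathcal{R}}} J_\Sigma(K,L),
\]
and then taking $\sup_{L \in \mathbb{L}}$ on both sides preserves the inequality. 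Chaining these two displays yields exactly the claimed bound.

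I do not expect any real obstacle: the proof is a two-line set-inclusion argument combined with the definition of a saddle point. The only minor care-point is semantic rather than technical — one should note that $\tilde{\mathcal{R}}$ is treated as a fixed subset of $\mathbb{K}$ independent of $L$, so the inner infimum on the right-hand side is genuinely an infimum over this fixed set (not over $\mathcal{K}(L)$); this is exactly parallel to the role of $\bar{\mathcal{R}}$ in the mean-steering case, and the set inclusion $\tilde{\mathcal{R}}\subseteq \mathbb{K}$ is all that is used. Because the structural constraints (\ref{K_structure})--(\ref{L_structure}) on $K,L$ and the nonconvex covariance constraint (\ref{eq:new_cov_const}) play no active role in the inequality itself — they are only encoded in the definition of $\tilde{\mathcal{R}}$ — no new technical machinery beyond what was used for Lemma \ref{lemma:mean_const} is needed.
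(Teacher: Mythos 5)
Your proposal is correct and matches the paper's intent exactly: the paper omits the proof of this lemma precisely because it is the same two-step argument used for the mean-steering case (equate $\inf\sup$ and $\sup\inf$ via the UCSG saddle-point hypothesis, then use $\tilde{\mathcal{R}} \subseteq \mathbb{K}$ and monotonicity of the infimum under domain restriction). No gaps; your remark that $\tilde{\mathcal{R}}$ is a fixed subset independent of $L$ is the right reading of the statement.
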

	
	\vspace{0.1in}
	
	Similarly, in the CCSG, a pure-strategy equilibrium need not exist.
	To this end, consider a simple Jacobi procedure given in Algorithm \ref{algo:J_Nash} to arrive at an equilibrium solution, assuming one exists.
	For Algorithm \ref{algo:J_Nash} to converge to an equilibrium solution for any $K_0$, $L_0$, the solution has to be a stable one \cite{li1987distributed}. 
	The conditions for the existence of a stable equilibrium for the case where the cost is convex in $K$ and concave in $L$ can be found in Ref. \cite{li1987distributed}.
	
	\begin{algorithm}
		\caption{Jacobi procedure to obtain saddle points}\label{algo:J_Nash}
		\begin{algorithmic}[1]
			\Procedure{Jacobi}{$K_0$,$L_0$}
			\For {i = 0,1,2,\dots}
			\State $L_{i+1} := \underset{L \in \mathbb{L}}{\arg\max}~J_\Sigma(K_i,L)$
			\State $K_{i+1} := \underset{K \in \mathcal{K}(L_{i})}{\arg\min}~J_\Sigma(K,L_{i})$
			\EndFor
			\State \Return $K_{i+1}$, $L_{i+1}$
			\EndProcedure
		\end{algorithmic}
	\end{algorithm}
	
	Subsequently, under the assumptions that $\Sigma_s \otimes (\mathcal{B}^\top\bar{Q}\mathcal{B} + \bar{R}) \succ 0$ (convex in $K$) and $\Sigma_s \otimes (\mathcal{C}^\top\bar{Q}\mathcal{C} - \bar{S}) \prec 0$ (concave in $L$), we can formulate the successive minimization and maximization problems as convex programming problems by relaxing the equality constraint in (\ref{eq:new_cov_const}) to an inequality constraint, 
	\begin{align}
	\Sigma_N \succeq E_N (I + \mathcal{B}K + \mathcal{C}L)\Sigma_s(I + \mathcal{B}K + \mathcal{C}L)^\top E_N^\top. \label{eq:org_cov_constraint}
	\end{align} 
	
	\vspace{0.1in}
	
	\begin{lemma}
		Assuming $\Sigma_N \succ 0$, the inequality constraint (\ref{eq:org_cov_constraint}) can be expressed as
		\begin{align}
		\|\Sigma_N^{-1/2}E_N(I + \mathcal{B}K + \mathcal{C}L)\Sigma_s^{1/2}\|_2 - 1 \leq 0. \label{eq:mod_cov_constr}
		\end{align}
	\end{lemma}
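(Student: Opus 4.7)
The plan is to reduce the matrix inequality to a statement about the operator (spectral) norm of a single matrix by conjugating with $\Sigma_N^{-1/2}$, then invoking the standard identity $\|M\|_2^2 = \lambda_{\max}(MM^\top)$. Since $\Sigma_N \succ 0$ is assumed, $\Sigma_N^{1/2}$ (and hence $\Sigma_N^{-1/2}$) is well-defined, symmetric, and positive definite, so it is a congruence transformation that preserves semidefinite inequalities.

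Concretely, I would first set
\begin{equation*}
M \;\triangleq\; \Sigma_N^{-1/2}\,E_N(I + \mathcal{B}K + \mathcal{C}L)\,\Sigma_s^{1/2},
\end{equation*}
observing that $\Sigma_s = \mathcal{A}\Sigma_0\mathcal{A}^\top + \mathcal{D}\mathcal{D}^\top \succeq 0$ so $\Sigma_s^{1/2}$ exists. Then I would pre- and post-multiply the original inequality (\ref{eq:org_cov_constraint}) by $\Sigma_N^{-1/2}$; since congruence by a nonsingular symmetric matrix preserves the $\succeq$ order, this yields the equivalent inequality
\begin{equation*}
I \;\succeq\; \Sigma_N^{-1/2} E_N (I + \mathcal{B}K + \mathcal{C}L)\,\Sigma_s\,(I + \mathcal{B}K + \mathcal{C}L)^\top E_N^\top \Sigma_N^{-1/2} \;=\; M M^\top,
\end{equation*}
where I used $\Sigma_s = \Sigma_s^{1/2}(\Sigma_s^{1/2})^\top$ with the symmetric square root.

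Next I would invoke the spectral characterization: $I \succeq MM^\top$ is equivalent to $\lambda_{\max}(MM^\top)\le 1$, which is in turn equivalent to $\sigma_{\max}(M)\le 1$, i.e. $\|M\|_2 \le 1$. Rearranging, this is exactly (\ref{eq:mod_cov_constr}). The reverse direction follows by the same chain of equivalences read backwards, using that $\|M\|_2\le 1$ implies $MM^\top \preceq I$ and then conjugating back by $\Sigma_N^{1/2}$.

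There is no real obstacle here; the only subtlety is making sure the congruence by $\Sigma_N^{-1/2}$ is reversible, which requires $\Sigma_N \succ 0$ (not merely $\succeq 0$), explaining the hypothesis. If one only had $\Sigma_N \succeq 0$ with a nontrivial kernel, the rewrite would still hold on the range of $\Sigma_N$ but the clean norm form (\ref{eq:mod_cov_constr}) would require a pseudoinverse and a compatibility condition on the range of $E_N(I+\mathcal{B}K+\mathcal{C}L)\Sigma_s^{1/2}$; since the lemma explicitly assumes $\Sigma_N \succ 0$, this complication is avoided.
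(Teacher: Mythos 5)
Your proof is correct and takes essentially the same route as the paper's: conjugate the inequality by $\Sigma_N^{-1/2}$ and translate the resulting $I \succeq MM^\top$ into the spectral-norm bound $\|M\|_2 \leq 1$ via $\lambda_{\max}(MM^\top)$. If anything, you are slightly more explicit than the paper in noting that $\lambda_{\max}(MM^\top) = \|M\|_2^2$, so that the eigenvalue bound $\leq 1$ is equivalent to $\|M\|_2 \leq 1$, a step the paper's proof passes over silently.
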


	\vspace{0.1in}

	\begin{proof}
		Since assumption $\Sigma_N \succ 0$, (\ref{eq:org_cov_constraint}) can be rewritten as
		\begin{align*}
		I - \Sigma_N^{-1/2} E_N (I + \mathcal{B}K + \mathcal{C}L)\Sigma_s (I + \mathcal{B}K + \mathcal{C}L)^\top E_N^\top \Sigma_N^{-1/2} \succeq 0.
		\end{align*}
		As it is symmetric, the matrix $\Sigma_N^{-1/2} E_N (I + \mathcal{B}K + \mathcal{C}L)\Sigma_s(I + \mathcal{B}K + \mathcal{C}L)^\top E_N^\top \Sigma_N^{-1/2}$ is diagonalizable via an orthogonal matrix $T \in \mathbb{R}^{n\times n}$ as
		\begin{align}
		T (I_n - \mathrm{diag}(\lambda_1,\dots,\lambda_n))S^\top \succeq 0, \label{eq:eig_value}
		\end{align}
		where $\lambda_1,\dots,\lambda_n$ are its eigenvalues.
		From (\ref{eq:eig_value}), we have
		\begin{align}
		1 - \lambda_{max}\big(\Sigma_N^{-1/2} &E_N (I + \mathcal{B}K + \mathcal{C}L)\Sigma_s (I + \mathcal{B}K + \mathcal{C}L)^\top E_N^\top \Sigma_N^{-1/2}\big) \geq 0.\\
		\implies 1 - \|\Sigma_N^{-1/2}&E_N(I + \mathcal{B}K + \mathcal{C}L)\Sigma_s^{1/2}\|_2 \geq 0.
		\end{align}
		Hence proved.
	\end{proof}


\section{Numerical Simulations}
	\label{sec:sims}
	
	As mentioned earlier, in the lower game of the mean steering case, the controller has an advantage to drive the distribution to a given terminal Gaussian, assuming the system is controllable. 
	A more challenging case is that of the upper game, where the controller has to ensure that the terminal constraint (\ref{eq:org_meancont}) is met while choosing its input first.
	In this section, we first present test examples for the upper game of the CMSG with linear time-invariant systems, and then analyze the missile end-game guidance problem.
	For the covariance steering part, YALMIP \cite{Lofberg2004} in conjunction with MOSEK \cite{mosek2017} was used to solve the successive convex optimization problems in the Jacobi procedure.
	The convergence criterion for the iterative method is $\epsilon_k, \epsilon_\ell \leq \epsilon$, where $\epsilon_k = \|K_{i+1} - K_i\|$ and $\epsilon_\ell = \|L_{i+1} - L_i\|$.
	
	\begin{figure}[htb]
		\centering
		\includegraphics[width=0.4\textwidth]{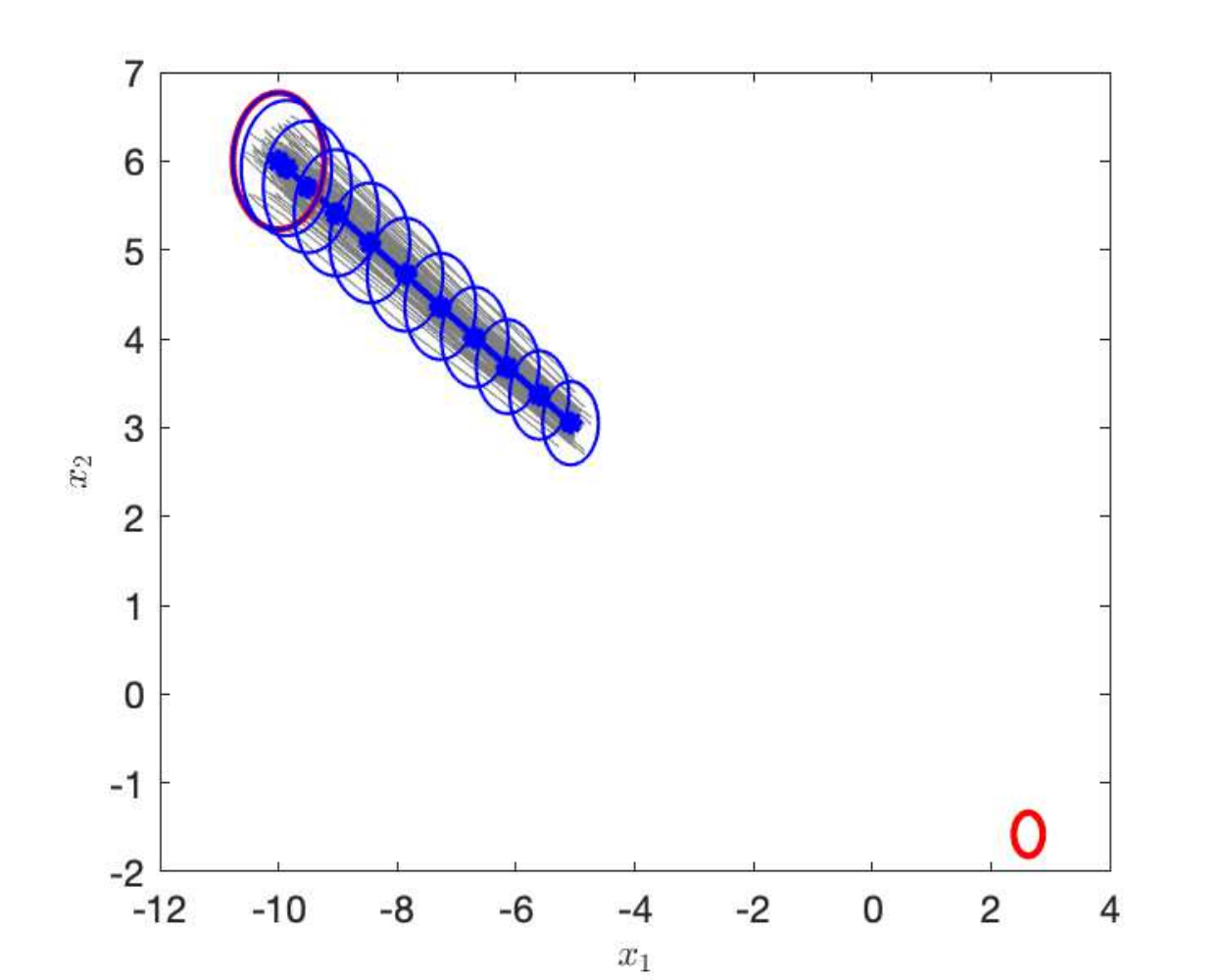}
		\caption{Unconstrained mean and covariance steering}
		\label{fig:unconstrained}
	\end{figure}
	
	\subsection{Test Example} 
	
	Consider the linear system
	\begin{align}
	z_{k+1} = Az_k + Bu_k + Cv_k + Dw_k \label{eq:discrete_dyn}
	\end{align}
	where $z_k = [x_1,x_2,x_3,x_4]^\top \in \mathbb{R}^4$, $u_k, v_k \in \mathbb{R}^2$, $w_k \in \mathbb{R}^4$,
	\begin{align}
	A = \left[\begin{array}{cccc}
	1 & 0 & \Delta t & 0\\
	0 & 1 & 0 & \Delta t\\
	0 & 0 & 1 & 0\\
	0 & 0 & 0 & 1
	\end{array}\right], \quad
	B = \left[\begin{array}{cc}
	\Delta t^2 & 0 \\
	0 & \Delta t^2 \\
	\Delta t & 0 \\
	0 & \Delta t
	\end{array}\right],
	\end{align}
	$C = -B$, and $D = 0.01I_4$.
	Note that $x_1$, $x_2$ can be understood as relative coordinates, and $x_3$, $x_4$ are the relative velocities along the $x_1$ and $x_2$ axes, respectively, with $\Delta t = 0.2$ being the time-step size.
	Finally, $u_k$ and $v_k$ are the accelerations of the pursuer (controller) and the evader (stopper), respectively.
	
\begin{figure}[htb!]
    \centering
    \subfigure[Sampled Trajectories]{\includegraphics[width = 0.4\textwidth]{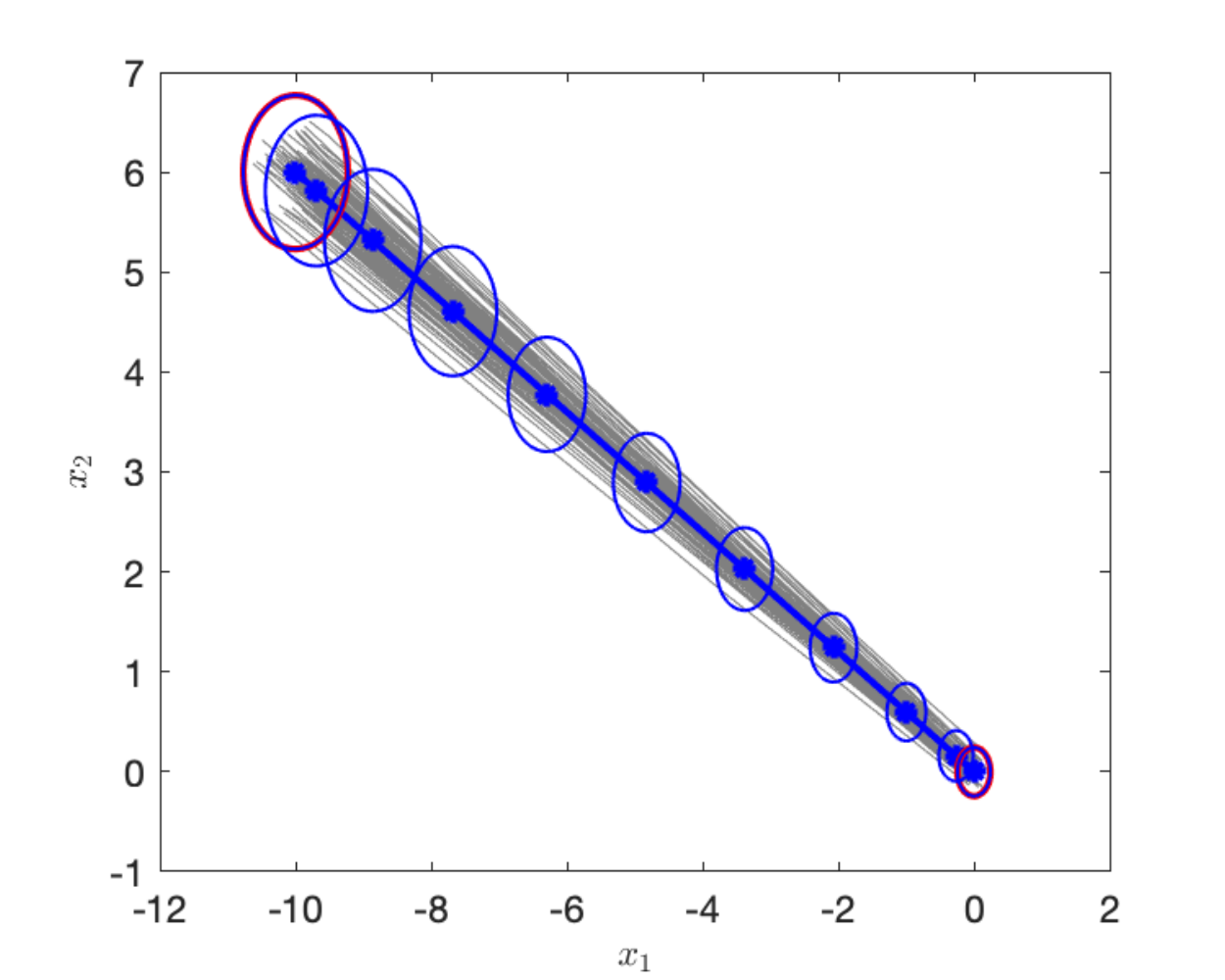}}
    \subfigure[Convergence of the Jacobi procedure]{\includegraphics[width = 0.4\textwidth]{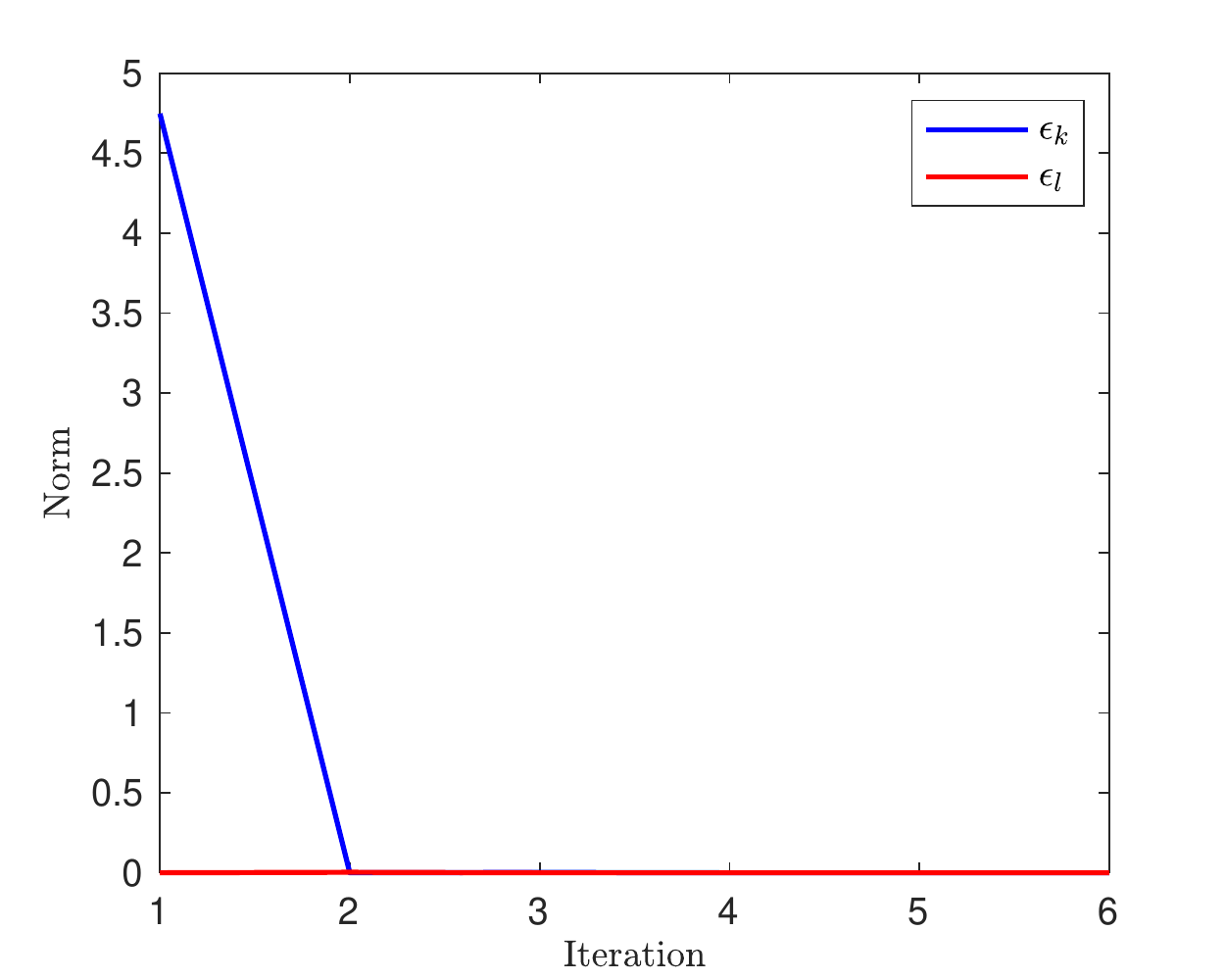}}
    \caption{Constrained mean and covariance steering: A case where the covariance condition is met by the controller.}
    \label{fig:infsup_work}
\end{figure}

	The initial condition is chosen to be 
	$\mu_0 = [-10,6,0,0]^\top$, $\Sigma_0 =$ blkdiag(0.05, 0.05, 0.01, 0.01),
	and the terminal constraint is
	$\mu_N = [0,0,0,0]^\top$, $\Sigma_N =$ blkdiag(0.005, 0.005, 0.001, 0.001).
	The time horizon is fixed at $N = 10$, and the cost matrices are $Q_k = I_4$, and $R_k = I_2$ $S_k = 100I_2$, for all $k \geq 1$.
	The instance is first analyzed without the terminal constraint, and the solution to the unconstrained game is obtained. 
	The UCSG is solved using the Jacobi procedure illustrated in Ref. \cite{li1987distributed}.
	Fig. \ref{fig:unconstrained} presents the solution to the unconstrained game for the given initial condition.
	The red ellipses in Fig. \ref{fig:unconstrained} denote the $3\sigma$ error of the initial and the desired terminal state distributions of $x_1$ and $x_2$ coordinates. 
	The blue solid line illustrates the mean trajectory, and the blue ellipses illustrate the covariance evolution over the time horizon.
	The gray lines are the trajectories simulated for 100 different initial conditions that are sampled from $\mathcal{N}(\mu_0,\Sigma_0)$.
	It can be observed that the mean and the covariance trajectories do not meet the controller's terminal conditions.
	For the CDG, the relative controllability matrix is found to have full row rank, and therefore the mean can be driven to the specified terminal value.
	Also, the covariance steering problem is feasible with $\epsilon = 10^{-5}$, and the result is illustrated in Fig. \ref{fig:infsup_work}. 
	The convergence of the Jacobi procedure (Algorithm \ref{algo:J_Nash}) can be observed in Fig. \ref{fig:infsup_work}(b).
	From Fig. \ref{fig:infsup_work}(a), it can be observed that the covariance constraint is satisfied.
	
	\begin{figure}[htb]
		\centering
		\includegraphics[width=0.4\textwidth]{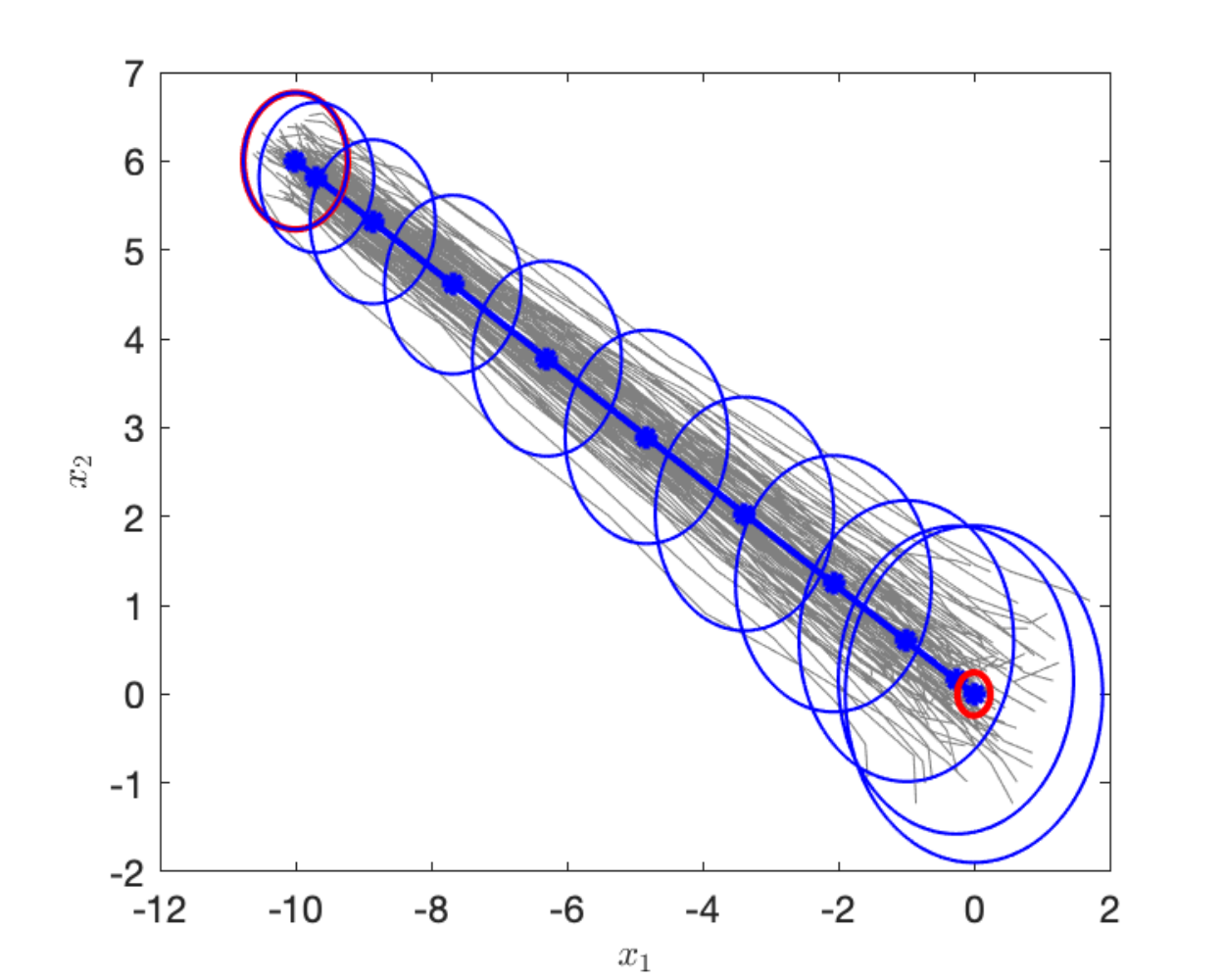}
		\caption{A case where the terminal covariance constraint is not met.}
		\label{fig:cov_notconvg}
	\end{figure}
	
	Fig. \ref{fig:cov_notconvg} illustrates the case where $D = 0.1I_4$, while the rest of the values are kept unchanged.
	Since changing the matrix $D$ does not change the behavior of the mean, in this case, the mean converges to the specified terminal value.
	However, the covariance constraint cannot be achieved in this case and from Fig. \ref{fig:cov_notconvg}, it can be observed that the covariance ellipse grows with time.
	The result in Fig. \ref{fig:cov_notconvg} is for the set of optimal gains $(K^*,L^*)$, obtained by  minimizing the cost (\ref{eq:cov_cost}) subject to the constraint (\ref{eq:cov_grad2}), since the constraint (\ref{eq:mod_cov_constr}) cannot be met.
	
	\subsection{Missile Endgame Guidance} 
	
	Fig. \ref{fig:missile_scenario} presents a schematic of the interception geometry during a missile engagement scenario. During the endgame, the relative dynamics can be linearized along the initial line of sight while assuming a constant closing speed $V_c = V_p + V_e$, where $V_p$ and $V_e$ are the constant speeds of the missile and of the target, respectively.
	Trajectory linearization is well established for ballistic missile defense where the endgame is over a short duration, and begins with near ``head-on" initial conditions. Now, consider the linearized dynamics of a missile during the end-game in continuous time \cite{turetsky2003},
	\begin{align}
	\dot{z} = A_cz + B_cu + C_cv, \label{eq:cont_dyn}
	\end{align}
	where $z = [y,\dot{y},a_e,a_p]^\top$, $y$ is the target's relative distance to the missile normal to the reference line (initial line-of-sight), $\dot{y}$ is the relative speed, $a_e$ and $a_p$ are the lateral forces acting on the target and on the missile, respectively. 
	In (\ref{eq:cont_dyn}), $u$ and $v$ are the commanded lateral accelerations of the missile and of the target, respectively. The matrices in (\ref{eq:cont_dyn}) are given by
	\begin{subequations}
		\begin{align}
		A_c = \left[\begin{array}{cccc}
		0 & 1 & 0 & 0\\
		0 & 0 & 1 & -1\\
		0 & 0 & -1/\tau_e & 0\\
		0 & 0 & 0 & -1/\tau_p
		\end{array}\right], \quad
		B_c = \left[\begin{array}{c}
		0 \\
		0 \\
		0 \\
		1/\tau_p 
		\end{array}\right],\quad
		C_c = \left[\begin{array}{c}
		0  \\
		0 \\
		1/\tau_e \\
		0
		\end{array}\right],
		\end{align}
	\end{subequations}
	where $\tau_p$, and $\tau_e$ are model parameters \cite{turetsky2003}. The corresponding discrete matrices can be obtained from a given time-step size $\Delta t$, and the system evolution is expressed using (\ref{eq:discrete_dyn}) by assuming process noise in the system entering through the control channels.
	The system matrices for this example are given by $A = \exp(A_c \Delta t)$, $B = \int_{0}^{\Delta t} \exp(A_c\tau)B_c\diff{\tau}$, $C = \int_{0}^{\Delta t} \exp(A_c\tau)C_c\diff{\tau}$,
	\begin{align}
	D = \alpha \int_{0}^{\Delta t} \exp(A_c\tau)\diff{\tau} \left[\begin{array}{cc}
	0 & 0 \\
	0 & 0 \\
	0 & 1 \\
	1 & 0
	\end{array}\right],
	\end{align}
	where $\alpha$ is a constant. 
	\begin{figure}[htb]
		\centering
		\includegraphics[width=0.4\textwidth]{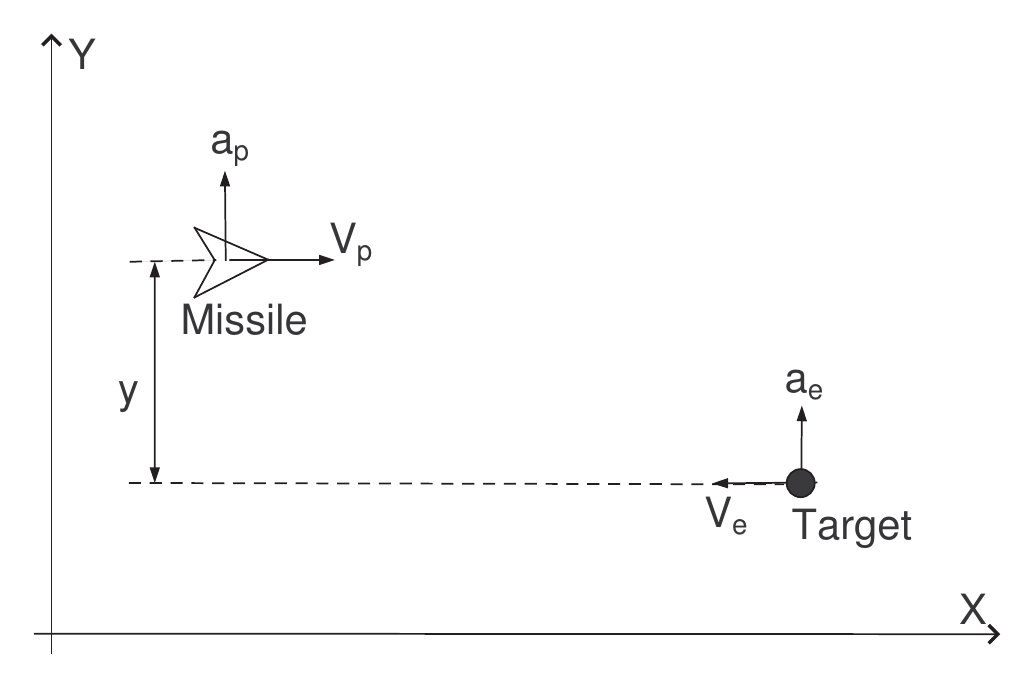}
		\caption{Planar end-game scenario}
		\label{fig:missile_scenario}
	\end{figure}
	
	For simulation purposes, the following parameters are chosen: $\Delta t = 0.1$, the initial separation along the line of sight $x_0 = 3500$, $\tau_e = 0.02$, $\tau_p = 0.01$. 
	We choose $V_p = 3000$, $V_e = 2000$, and therefore $t_f = x_0/(V_p+V_e) = 0.7$, $N = t_f/\Delta t = 7$.
	The initial conditions are  
	$\mu_0 = [0,350,0,0]^\top,~ \Sigma_0 = \text{blkdiag}(0.2,0.2,0.2,0.2),$
	and the terminal constraint is
	$\mu_N = [0,0,0.1,0.1]^\top,~ \Sigma_N = \text{blkdiag}(0.1,10,1,1).$
	Furthermore, $Q_k = 10^{-6} I_4$, $R_k = 10^2$, $S_k = 3\times 10^8$, for all $k \geq 1$.
	For this example, the relative controllability matrix is found to have full row rank, and the covariance steering problem is feasible. 
	The result is illustrated in Fig. \ref{fig:missile_example}, which shows the relative distance $y$ versus the time-step. 
	The errorbars (in red) indicate the $3\sigma$ error in $y$ at the initial and the final time-steps.
	From Fig. \ref{fig:missile_example}, it can be observed that the mean and covariance constraints are satisfied to successfully intercept the target during the end-game.
	
	\begin{figure}[htb]
		\centering
		\includegraphics[width=0.4\textwidth]{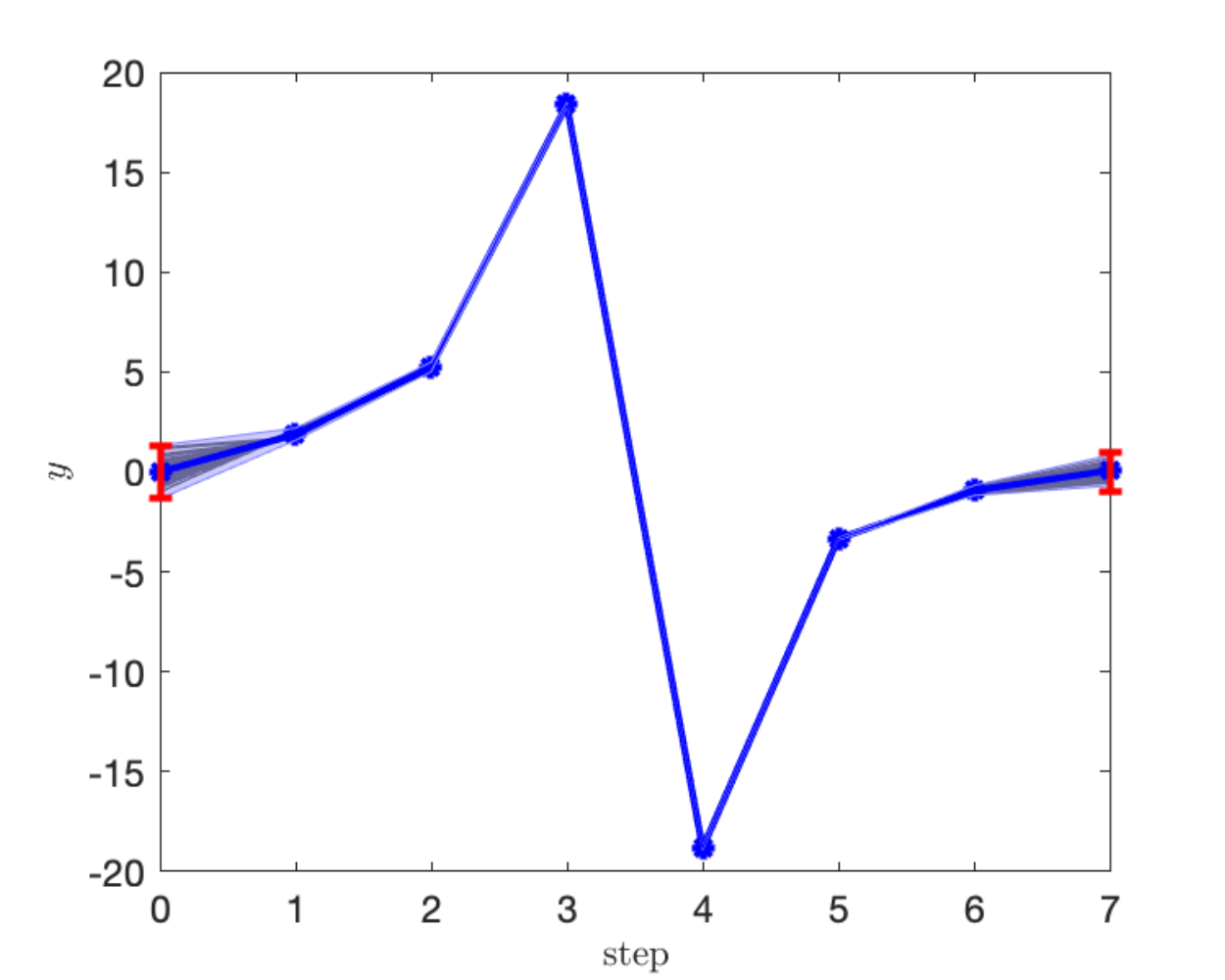}
		\caption{Mean and covariance steering for a missile engagement during the end-game}
		\label{fig:missile_example}
	\end{figure}
	
	Fig. \ref{fig:missile_traj} shows one of the many missile trajectories (generated for Fig. \ref{fig:missile_example}) relative to the target (red cross) in the $x-y$ plane. 
	Note that the motion along the $y$-axis is negligible compared to the missile's motion along the $x$-axis and consequently, a skewed aspect ratio has to be considered for Fig. \ref{fig:missile_example}. 
	
	\begin{figure}[htb]
		\centering
		\includegraphics[width=0.4\textwidth]{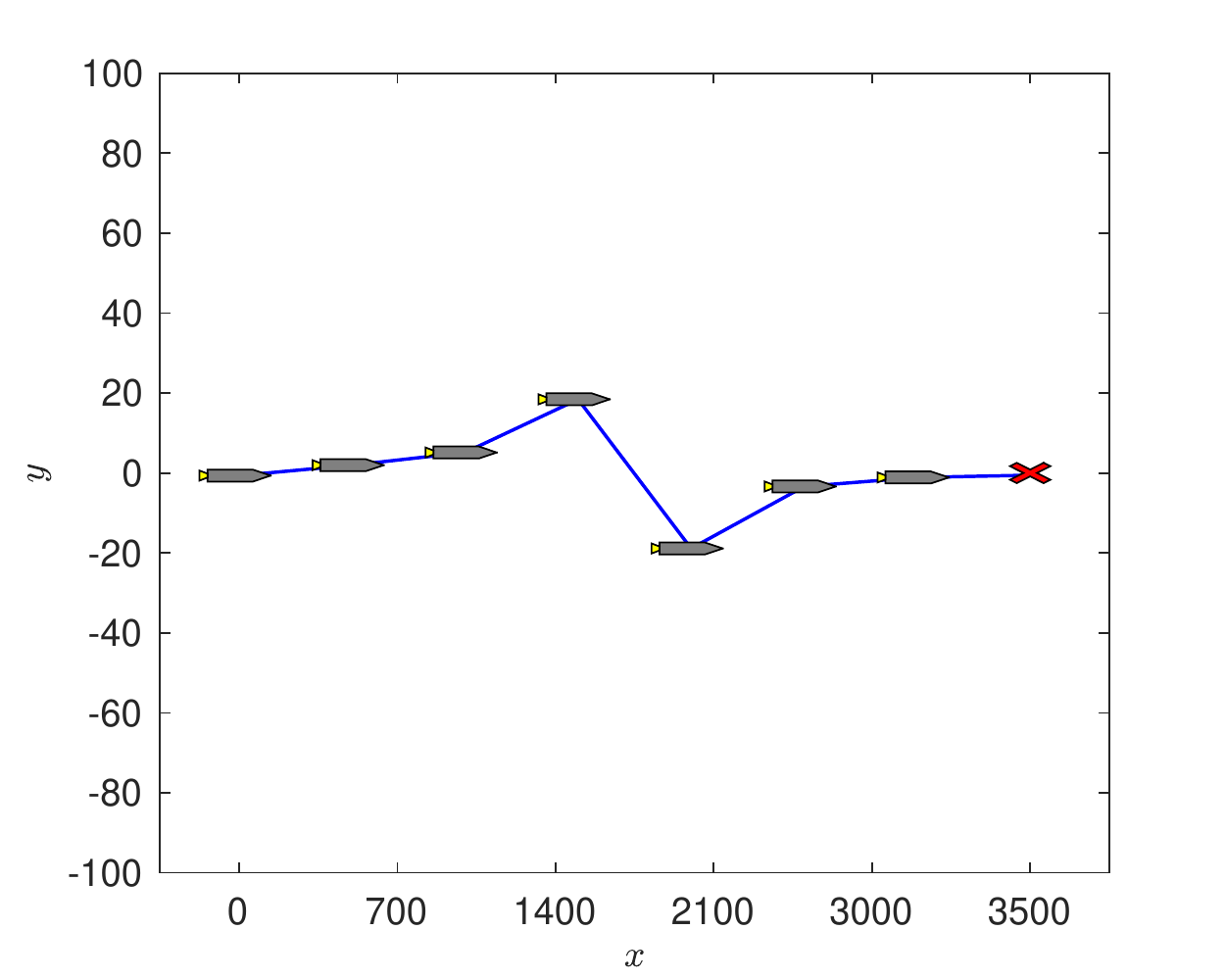}
		\caption{Missile's trajectory relative to the target}
		\label{fig:missile_traj}
	\end{figure}


\section{Conclusion} 
\label{sec:conclude}

This work addressed the problem of steering a Gaussian in adversarial scenarios using the theory of general constrained games. 
The problem is posed from a perspective of the player that desires to drive the distribution to a given terminal Gaussian while minimizing a quadratic cost. 
The player that tries to maximize the cost is assumed to be indifferent to the terminal constraint.
It is shown that the game need not have a saddle point equilibrium.
Subsequently, we obtained necessary conditions for the controller to drive the mean to the specified value in the upper game.
The covariance steering problem is solved numerically using the well-known Jacobi procedure.
The approach is illustrated via numerical examples.
Future work includes investigating the problem of missile and radar in the context of general constrained games in a discrete-time setting. 


\section*{Acknowledgment}                             
This work has been supported by NSF award CMMI-1662542.  


\begin{thebibliography}{10}

\bibitem{Altman2009}
E.~Altman and E.~Solan.
\newblock Constrained games: The impact of the attitude to adversary's
  constraints.
\newblock {\em IEEE Transactions on Automatic Control}, 54(10):2435--2440,
  2009.

\bibitem{Bagchi1981}
Arunabha Bagchi and Geert~Jan Olsder.
\newblock Linear-quadratic stochastic pursuit-evasion games.
\newblock {\em Applied Mathematics and Optimization}, 7(1):95--123, 1981.

\bibitem{bakolas2016optimalCDC}
Efstathios Bakolas.
\newblock Optimal covariance control for discrete-time stochastic linear
  systems subject to constraints.
\newblock In {\em IEEE Conference on Decision and Control}, pages 1153--1158,
  Las Vegas, NV, Dec. 2016.

\bibitem{bakolas2016optimalACC}
Efstathios Bakolas.
\newblock Optimal covariance control for stochastic linear systems subject to
  integral quadratic state constraints.
\newblock In {\em American Control Conference}, pages 7231--7236, Boston, MA,
  July 2016.

\bibitem{bakolas2018finite}
Efstathios Bakolas.
\newblock Finite-horizon covariance control for discrete-time stochastic linear
  systems subject to input constraints.
\newblock {\em Automatica}, 91:61--68, 2018.

\bibitem{Basar2014}
Tamer Ba{\c{s}}ar.
\newblock {\em Stochastic Differential Games and Intricacy of Information
  Structures}, pages 23--49.
\newblock Springer, Berlin, Heidelberg, 2014.

\bibitem{Basar1999}
Tamer Ba{\c{s}}ar and Geert~Jan Olsder.
\newblock {\em Dynamic Noncooperative Game Theory}, chapter~4.
\newblock SIAM, {S}econd edition, 1999.

\bibitem{Ho1968}
R.~Behn and Yu-Chi Ho.
\newblock On a class of linear stochastic differential games.
\newblock {\em IEEE Transactions on Automatic Control}, 13(3):227--240, 1968.

\bibitem{Bernhard1988}
P.~Bernhard and A.~{-L}. Colomb.
\newblock Saddle point conditions for a class of stochastic dynamical games
  with imperfect information.
\newblock {\em IEEE Transactions on Automatic Control}, 33(1):98--101, 1988.

\bibitem{BLEY1971}
Kenneth~B. Bley and Edwin~B. Stear.
\newblock Discrete stochastic differential games.
\newblock volume~8 of {\em Advances in Control Systems}, pages 89 -- 140.
  Elsevier, 1971.

\bibitem{CASTANON1976}
David Casta{\~n}{\'o}n and Michael Athans.
\newblock On stochastic dynamic {S}tackelberg strategies.
\newblock {\em Automatica}, 12(2):177 -- 183, 1976.

\bibitem{chen2018optimalIII}
Y.~Chen, T.~T. Georgiou, and M.~Pavon.
\newblock Optimal steering of a linear stochastic system to a final probability
  distribution, {Part III}.
\newblock {\em IEEE Transactions on Automatic Control}, 63(9):3112 -- 3118,
  2018.

\bibitem{chen2016optimalI}
Yongxin Chen, Tryphon~T Georgiou, and Michele Pavon.
\newblock Optimal steering of a linear stochastic system to a final probability
  distribution, {Part I}.
\newblock {\em IEEE Transactions on Automatic Control}, 61(5):1158--1169, 2016.

\bibitem{chen2016optimalII}
Yongxin Chen, Tryphon~T Georgiou, and Michele Pavon.
\newblock Optimal steering of a linear stochastic system to a final probability
  distribution, {Part II}.
\newblock {\em IEEE Transactions on Automatic Control}, 61(5):1170--1180, 2016.

\bibitem{Chong1971}
Chee-Yee Chong and M.~Athans.
\newblock On the stochastic control of linear systems with different
  information sets.
\newblock {\em IEEE Transactions on Automatic Control}, 16(5):423--430, 1971.

\bibitem{Clemens2017}
J.~W. Clemens and J.~L. Speyer.
\newblock The {LQG} game with nonclassical information pattern using a direct
  solution method.
\newblock In {\em American Control Conference}, pages 418--423, May 2017.

\bibitem{Collins1987}
E.~Collins and R.~Skelton.
\newblock A theory of state covariance assignment for discrete systems.
\newblock {\em IEEE Transactions on Automatic Control}, 32(1):35--41, 1987.

\bibitem{fleming1989existence}
Wendell~H Fleming and Panagiotis~E Souganidis.
\newblock On the existence of value functions of two-player, zero-sum
  stochastic differential games.
\newblock {\em Indiana University Mathematics Journal}, 38(2):293--314, 1989.

\bibitem{free2018}
B.~A. Free, M.~J. McHenry, and D.~A. Paley.
\newblock Non-deterministic predator-prey model with accelerating prey.
\newblock In {\em American Control Conference}, pages 1202--1207, 2018.

\bibitem{goldshtein2017finite}
Maxim Goldshtein and Panagiotis Tsiotras.
\newblock Finite-horizon covariance control of linear time-varying systems.
\newblock In {\em IEEE Conference on Decision and Control}, pages 3606--3611,
  Melbourne, Australia, Dec. 12 --15, 2017.

\bibitem{grigoriadis1997minimum}
Karolos~M Grigoriadis and Robert~E Skelton.
\newblock Minimum-energy covariance controllers.
\newblock {\em Automatica}, 33(4):569--578, 1997.

\bibitem{Gupta2014}
A.~Gupta, A.~Nayyar, C.~Langbort, and T.~Ba{\c{s}}ar.
\newblock Common information based {M}arkov perfect equilibria for
  linear-{G}aussian games with asymmetric information.
\newblock {\em SIAM Journal on Control and Optimization}, 52(5):3228--3260,
  2014.

\bibitem{jungsu2018}
Jung-Su Ha, Hyeok-Joo Chae, and Han-Lim Choi.
\newblock A stochastic game-based approach for multiple beyond-visual-range air
  combat.
\newblock {\em Unmanned Systems}, 06(01):67--79, 2018.

\bibitem{halder2016finite}
Abhishek Halder and Eric~DB Wendel.
\newblock Finite horizon linear quadratic {G}aussian density regulator with
  {W}asserstein terminal cost.
\newblock In {\em American Control Conference}, pages 7249--7254, Boston, MA,
  July 2016.

\bibitem{hotz1985covariance}
Anthony~F Hotz and Robert~E Skelton.
\newblock A covariance control theory.
\newblock In {\em IEEE Conference on Decision and Control}, volume~24, pages
  552--557, Fort Lauderdale, FL, Dec. 1985.

\bibitem{iwasaki1992quadratic}
T~Iwasaki and Robert~E Skelton.
\newblock Quadratic optimization for fixed order linear controllers via
  covariance control.
\newblock In {\em American Control Conference}, pages 2866--2870, Chicago, IL,
  June 24 -- 26, 1992.

\bibitem{Kantorovich1942}
Leonid~V Kantorovich.
\newblock On the transfer of masses.
\newblock In {\em Dokl. Akad. Nauk. SSSR}, volume~37, pages 227--229, 1942.

\bibitem{Kumar1980}
P.~Kumar and J.~Van Schuppen.
\newblock On {N}ash equilibrium solutions in stochastic dynamic games.
\newblock {\em IEEE Transactions on Automatic Control}, 25(6):1146--1149, 1980.

\bibitem{Leondes1979}
C.~T. Leondes and B.~Mons.
\newblock On-line solution of a stochastic pursuit-evasion game.
\newblock {\em Journal of Optimization Theory and Applications},
  28(3):411--428, 1979.

\bibitem{li1987distributed}
Shu Li and Tamer Ba{\c{s}}ar.
\newblock Distributed algorithms for the computation of noncooperative
  equilibria.
\newblock {\em Automatica}, 23(4):523--533, 1987.

\bibitem{Lofberg2004}
J.~L{\"{o}}fberg.
\newblock {YALMIP} : A toolbox for modeling and optimization in matlab.
\newblock In {\em Proceedings of the CACSD Conference}, Taipei, Taiwan, 2004.

\bibitem{magnus_matrix1999}
J.~R. Magnus and H.~Neudecker.
\newblock {\em Matrix Differential Calculus with Applications in Statistics and
  Econometrics}.
\newblock Wiley, Hoboken, Nj, USA, {T}hird edition, 1999.

\bibitem{mosek2017}
{MOSEK, ApS}.
\newblock The {MOSEK} optimization toolbox for {MATLAB} manual. version 8.1.,
  2017.

\bibitem{okamoto2018Optimal}
K.~Okamoto, M.~Goldshtein, and P.~Tsiotras.
\newblock Optimal covariance control for stochastic systems under chance
  constraints.
\newblock {\em IEEE Control Systems Letters}, 2(2):266--271, 2018.

\bibitem{okamoto2019}
K.~{Okamoto} and P.~{Tsiotras}.
\newblock Optimal stochastic vehicle path planning using covariance steering.
\newblock {\em IEEE Robotics and Automation Letters}, 4(3):2276--2281, July
  2019.

\bibitem{Pachter2010}
M.~Pachter and K.~D. Pham.
\newblock Discrete-time linear-quadratic dynamic games.
\newblock {\em Journal of Optimization Theory and Applications},
  146(1):151--179, 2010.

\bibitem{perlaza2011quality}
Samir~M Perlaza, Hamidou Tembine, Samson Lasaulce, and M{\'e}rouane Debbah.
\newblock Quality-of-service provisioning in decentralized networks: A
  satisfaction equilibrium approach.
\newblock {\em IEEE Journal of Selected Topics in Signal Processing},
  6(2):104--116, 2011.

\bibitem{rajpurohit2017}
Tanmay Rajpurohit, Wassim~M. Haddad, and Wei Sun.
\newblock Stochastic differential games and inverse optimal control and stopper
  policies.
\newblock {\em International Journal of Control}, pages 1--15, 2017.

\bibitem{SDG_book}
Kandethody~M. Ramachandran and Chris~P. Tsokos.
\newblock {\em Stochastic Differential Games. Theory and Applications},
  volume~2 of {\em Atlantis Studies in Probability and Statistics}.
\newblock Atlantis Press, first edition, 2012.

\bibitem{Repperger1972}
D.~W. Repperger and A.~J. Koivo.
\newblock Optimal terminal rendezvous as a stochastic differential game
  problem.
\newblock {\em IEEE Transactions on Aerospace and Electronic Systems},
  8(3):319--326, 1972.

\bibitem{Rhodes1969}
I.~Rhodes and D.~Luenberger.
\newblock Differential games with imperfect state information.
\newblock {\em IEEE Transactions on Automatic Control}, 14(1):29--38, 1969.

\bibitem{ridderhof2018uncertainty}
Jack Ridderhof and Panagiotis Tsiotras.
\newblock Uncertainty quantication and control during {M}ars powered descent
  and landing using covariance steering.
\newblock In {\em AIAA Guidance, Navigation, and Control Conference},
  Kissimmee, FL, Jan. 2018.

\bibitem{Schrodinger1931}
Erwin Schr{\"o}dinger.
\newblock {\em {\"U}ber die {Umkehrung} der {Naturgesetze}}.
\newblock Verlag Akademie der Wissenschaften in Kommission bei {Walter} de
  {Gruyter} u. {Company}, 1931.

\bibitem{shapley1953stochastic}
Lloyd~S Shapley.
\newblock Stochastic games.
\newblock {\em Proceedings of the National Academy of Sciences},
  39(10):1095--1100, 1953.

\bibitem{Speyer1967}
J.~L. Speyer.
\newblock A stochastic differential game with controllable statistical
  parameters.
\newblock {\em IEEE Transactions on Systems Science and Cybernetics},
  3(1):17--20, 1967.

\bibitem{Swarup2003}
A.~Swarup and J.~L. Speyer.
\newblock Linear-{Q}uadratic-{G}aussian differential games with different
  information patterns.
\newblock In {\em IEEE International Conference on Decision and Control},
  volume~4, pages 4146--4151 vol.4, Dec. 2003.

\bibitem{turetsky2003}
Vladimir Turetsky and Josef Shinar.
\newblock Missile guidance laws based on pursuit–evasion game formulations.
\newblock {\em Automatica}, 39(4):607--618, 2003.

\bibitem{Willman1969}
W.~Willman.
\newblock Formal solutions for a class of stochastic pursuit-evasion games.
\newblock {\em IEEE Transactions on Automatic Control}, 14(5):504--509, 1969.

\bibitem{xu1992improved}
J-H Xu and Robert~E Skelton.
\newblock An improved covariance assignment theory for discrete systems.
\newblock {\em IEEE transactions on Automatic Control}, 37(10):1588--1591,
  1992.

\bibitem{YAVIN1984}
Y.~Yavin.
\newblock The numerical solution of three stochastic differential games.
\newblock {\em Computers \& Mathematics with Applications}, 10(3):207 -- 234,
  1984.

\bibitem{YAVIN1987}
Y.~Yavin.
\newblock A stochastic two-target pursuit-evasion differential game with three
  players moving in a plane.
\newblock In {\em Pursuit-Evasion Differential Games}, International Series in
  Modern Applied Mathematics and Computer Science, pages 141 -- 149. Pergamon,
  Amsterdam, 1987.

\end{thebibliography}
\end{document}